\numberwithin{equation}{section}
\theoremstyle{plain}
\newtheorem{theorem}{Theorem}
\newtheorem{THEOREM}{Theorem}
\newtheorem{proposition}[theorem]{Proposition}
\newtheorem{lemma}[theorem]{Lemma}
\newtheorem{corollary}[theorem]{Corollary}
\theoremstyle{definition}
\newtheorem{definition}[theorem]{Definition}
\newtheorem*{definition*}{Definition}
\newtheorem*{notation*}{Notation}
\theoremstyle{remark}
\newtheorem{remark}[theorem]{Remark}
\newtheorem*{remark*}{Remark}
\newtheorem*{remarks*}{Remarks}
\newtheorem{example}[theorem]{Example}
\newtheorem*{example*}{Example}
\newtheorem*{examples*}{Examples}
\numberwithin{theorem}{section}
\newcommand\iso{{\widetilde\to}}                    
\newcommand\C{\mathbb{C}}                           
\newcommand\Z{\mathbb{Z}}                           
\newcommand\Q{\mathbb{Q}}                           
\newcommand\moprm[2]{\newcommand{#1}{\mathop{\mathrm{#2}}\nolimits}}     
\newcommand\mopit[2]{\newcommand{#1}{\mathop{\mathit{#2}}\nolimits}}     
\newcommand\mopbf[2]{\newcommand{#1}{\mathop{\mathbf{#2}}\nolimits}}     
\moprm{\Hom}{Hom}                                   
\moprm{\End}{End}                                   
\moprm{\Aut}{Aut}                                   
\moprm{\Iso}{Iso}                                   
\mopit{\HOM}{\cH om}                                
\mopit{\END}{\cE nd}                                
\mopit{\AUT}{\cA ut}                                
\mopit{\ISO}{\cI so}                                
\moprm{\Ext}{Ext}                                   
\moprm{\Tor}{Tor}                                   
\mopit{\EXT}{\cE xt}                                
\mopit{\TOR}{\cT or}                                
\moprm{\ob}{Ob}                                     
\moprm{\tr}{tr}                                     
\moprm{\rk}{rk}                                     
\moprm{\ad}{ad}                                     
\moprm{\Ad}{Ad}                                     
\moprm{\id}{id}                                     
\moprm{\supp}{supp}                                 
\moprm{\chr}{char}                                  
\moprm{\codim}{codim}                               
\moprm{\res}{res}                                   
\moprm{\im}{im}										
\moprm{\coim}{coim}									
\moprm{\coker}{coker}								
\moprm{\spec}{Spec}                                 
\moprm{\spf}{Spf}                                   
\mopbf{\gm}{G_m}                                    
\mopbf{\ga}{G_a}                                    
\newcommand\p[1]{{\mathbb{P}^{#1}}}                 
\newcommand\A[1]{{\mathbb{A}^{#1}}}					
\moprm{\Gal}{Gal}									
\newcommand{\cA}{{\mathcal A}}
\newcommand{\cD}{{\mathcal D}}
\newcommand{\cE}{{\mathcal E}}
\newcommand{\cH}{{\mathcal H}}
\newcommand{\cI}{{\mathcal I}}
\newcommand{\cK}{{\mathcal K}}
\newcommand{\cL}{{\mathcal L}}
\newcommand{\cM}{{\mathcal M}}
\newcommand{\cO}{{\mathcal O}}
\newcommand{\cR}{{\mathcal R}}
\newcommand{\cT}{{\mathcal T}}
\newcommand{\cV}{{\mathcal V}}
\newcommand{\cW}{{\mathcal W}}
\newcommand{\tK}{{\widetilde K}}
\newcommand{\fM}{{\mathfrak M}}
\mopit{\Hol}{\cH ol}
\mopit{\Mod}{\cM od}
\newcommand\DHol[1]{\Hol(\cD_{#1})}                        
\newcommand\kk{\Bbbk}
\newcommand\D{\cD}                                         
\newcommand\dz{{\frac{d}{d z}}}
\newcommand\ft[1]{{#1}^\wedge}
\moprm{\rig}{rig}
\moprm{\ord}{ord}
\moprm{\diag}{diag}
\moprm{\slope}{slope}
\moprm{\irreg}{irreg}
\moprm{\Char}{char}
\moprm{\gl}{\mathfrak{gl}}
\newcommand\Four{\mathfrak{F}}
\newcommand\four{F}
\newcommand\Rad{\cR}
\begin{document}
\title{Rigid irregular connections on $\p1$}
\author{D.~Arinkin}
\address{Department of Mathematics\\University of North Carolina\\Chapel Hill, NC\\USA}
\keywords{Rigid connections, irregular singularities, Katz's middle convolution algorithm}

\begin{abstract}
N.~Katz's middle convolution algorithm provides a description of rigid connections on $\p1$ with regular singularities. We extend the algorithm by adding
the Fourier transform to it. The extended algorithm provides a description of rigid connections with arbitrary singularities.
\end{abstract}
\maketitle

\section{Introduction}

In \cite{Ka}, N.~Katz suggested a new method of studying a local system $\cL$ on an open subset $U=\p1-\{x_1,\dots,x_n\}$: the middle convolution algorithm. He defined the middle convolution of
local systems on $\p1$, and showed that for a Kummer local system $\cK$, the operation of middle convolution with $\cK$ is invertible:
$$\cL=(\cL\star_{mid}\cK)\star_{mid}\cK^{-1}.$$
Here $\star_{mid}$ is the middle convolution. Usually, $\rk(\cL\star_{mid}\cK)\ne\rk\cL$.

To apply Katz's middle convolution algorithm to $\cL$, one looks for a rank one local system
$\ell_1$ and a Kummer local system $\cK_1$ such that the middle convolution
$$\cL_1=(\cL\otimes\ell_1)\star_{mid}\cK_1$$
has strictly smaller rank. The process is repeated until one arrives at
the local system $\cL_k$ whose rank can no longer be decreased by this operation. Note that
$\cL$ can be reconstructed from a smaller rank local system $\cL_k$ and the sequence of rank one local systems
$\{(\ell_1,\cK_1),\dots,(\ell_k,\cK_k)\}$ used in the algorithm. 
The isomorphism class of $\cL$ is encoded by the isomorphism class of $\cL_k$ and the 
monodromies of $\ell_i$'s and $\cK_i$'s.

Katz applied the algorithm to rigid local system (a local system is \emph{rigid} if it is determined up to isomorphism by the conjugacy classes of its local monodromies).  He showed that any rigid irreducible local system $\cL$ is reduced by the algorithm to a rank one system $\cL_k$. This describes rigid irreducible
local systems using collections of numbers (the monodromies of $\cL_k$, $\ell_i$'s, and 
$\cK_i$'s).
Since then, the algorithm found numerous applications to both rigid and non-rigid local systems, see \cite{Si} for a summary.

Katz's middle convolution algorithm applies to the following `flavors' of local systems:

\begin{itemize}
\item Representations of the fundamental group of $\pi_1(\mathbb{P}_\mathbb{C}^1-\{x_1,\dots,x_n\})$ (`Betti flavor');

\item Tamely ramified $l$-adic local systems on $\mathbb{P}_{\kk}^1-\{x_1,\dots,x_n\}$ for
any field $\kk$, where $l$ is a prime distinct from $\Char(\kk)$;

\item Vector bundles with connections on $\mathbb{P}_{\kk}^1-\{x_1,\dots,x_n\}$ with
regular singularities at the punctures $x_1,\dots,x_n$ for any field $\kk$ of characteristic
zero (`de Rham flavor'). In classical language, one works with linear ordinary differential equations with Fuchsian singularities.
\end{itemize}

In this paper, we take the de Rham point of view. We extend the middle convolution algorithm
to connections with irregular singularities by using two operations: the middle
convolution and the Fourier transform. We call this extension \emph{irregular Katz's algorithm}. It is described in Section~\ref{sc:proof}; here is a short summary.

For a bundle with connection $\cL$ on an open set $U\subset\A1$, denote its Fourier transform by $\ft\cL$.
The Fourier transform is a bundle with connection $\ft\cL$ on an open subset $\ft{U}\subset\A1$; usually 
$\ft{U}\ne U$ and $\rk \ft\cL\ne\rk \cL$. 

On the each step of irregular Katz's algorithm, we try to lower the rank of $\cL$ by one of the following two operations:

\begin{enumerate}
\item\label{it:R} Replacing $\cL$ with the middle convolution $$\cL_1=(\cL\otimes\ell)\star_{mid}\cK^\lambda$$ for appropriate
choices of a line bundle with connection $\ell$ and a Kummer local system $\cK^\lambda$.  

\item\label{it:F} Replacing $\cL$ with the Fourier transform $$\cL_1=\ft{(\cL\otimes\ell)}$$ for appropriate
choice of a line bundle with connection $\ell$ and the choice of the infinity 
$\infty\in\p1$. (The point $\infty\in\p1$ plays a special
role in the Fourier transform, and we use it as a parameter in the operation.)
\end{enumerate}

Both operations \eqref{it:R} and \eqref{it:F} are invertible, so $\cL$ is determined up to isomorphism by $\cL_1$ and the numerical parameters used in the operation. We repeat this
procedure to decrease the rank of $\cL$ as much as possible.

In this paper, we work with rigid irreducible bundles with connections $\cL$. By definition,
irreducible $\cL$ is \emph{rigid} if it is determined up to isomorphism by the formal types of its singularities. The main result is that irregular Katz's algorithm always reduces such $\cL$
to a rank one bundle with connection; this yields a recursive description of irreducible rigid connections.

Our result answers the question posed by N.~Katz in \cite[p.~10]{Ka}. Also, in the introduction to \cite{BE}, S.~Bloch and
H.~Esnault express hope that their result (see Theorem~\ref{th:fourierrigid}) can be used to classify rigid connections
with irregular singularities; our paper provides such a classification.

We hope that irregular Katz's algorithm has other applications. Two examples
are discussed in Sections~\ref{sc:DS},~\ref{sc:rig0}.

\subsection{Acknowledgments}
I am very grateful to P.~Belkale for igniting my interest in Katz's middle convolution algorithm
and to V.~Drinfeld for sharing his views on the middle convolution. 

When I gave a talk on this subject at the Institute for Advanced Study, I learned that the 
extension of Katz's algorithm is also presented in a letter by P.~Deligne to N.~Katz. 
I would like to thank P.~Deligne for a copy of the letter. 

I discussed these results with many mathematicians. Besides those mentioned above, I would like to thank S.~Bloch, P.~Boalch, and A.~Varchenko. 
I am also grateful to the referee for useful comments.

\section{Main results}
Fix the ground field $\kk$, which is algebraically closed of characteristic zero.

\subsection{Connections and rigidity}

By definition, a bundle with connection on a nonempty open set $U\subset\p1$ is a pair
$\cL=(E_\cL,\nabla_\cL)$, where $E_\cL$ is a vector bundle on $U$ and the connection
$\nabla_\cL:E_\cL\to E_\cL\otimes\Omega_U$ is a $\kk$-linear map that satisfies the Leibniz identity
$$\nabla_\cL(fs)=f\nabla_\cL(s)+s\otimes df\qquad(f\in\cO_U,s\in E_\cL).$$
We simply say that $\cL$ is a connection on $U$.  
This paper uses the `de Rham' point of view, so the terms 
`local system on $U$' and `connection on $U$' are interchangeable. 

For a closed point $x\in\p1$, let $K_x$ denote the ring of formal Laurent series at
$x$. A choice of local coordinate $z$ identifies $K_x$ with $\kk((z))$. Let $\DHol{K_x}$
be the category of holonomic $\D$-modules on the punctured formal neighborhood of $x$.
 Explicitly, objects of $\DHol{K_x}$ are pairs $\cV=(E_\cV,\nabla_\cV)$, where $E_\cV$ is a finite-dimensional vector space over $K_x$ and $$\nabla_\cV:E_\cV\to E_\cV\otimes\Omega_{K_x}=\cV dz$$
is a $\kk$-linear map satisfying the Leibniz identity. We sometimes call $\cV$ a connection
on the punctured formal disk.

For two connections $\cL,\cL'$ on open set $U\subset\p1$, 
we denote by $\HOM(\cL,\cL')$ the local system of morphisms from $\cL$ to $\cL'$; equivalently,
$\HOM(\cL,\cL')=\cL'\otimes\cL^\vee$. By definition, $\END(\cL)=\HOM(\cL,\cL)$. We use similar
notation $\HOM(\cV,\cW)$, $\END(\cV)$ for $\cV,\cW\in\DHol{K_x}$, $x\in\p1$.

A connection $\cL$ on $U$ yields an object $\Psi_x(\cL)\in\DHol{K_x}$ for 
any $x\in\p1$. 
Essentially, $\Psi_x(\cL)$ is the restriction of $\cL$ to the punctured formal neighborhood of $x$:
$\Psi_x(\cL)=\cL\otimes K_x$. One can view $\Psi_x(\cL)$ as the nearby cycles of $\cL$.

\begin{definition} The \emph{formal type} $[\Psi_x(\cL)]$ of $\cL$ \emph{at} $x\in\p1$ is the isomorphism class of $\Psi_x(\cL)$. The \emph{formal type} of $\cL$ is the collection 
$$\{[\Psi_x(\cL)]\}_{x\in\p1}.$$
\end{definition} 
 
\begin{remark*} For $x\in U$, the restriction $\Psi_x(\cL)$ is trivial, so 
$[\Psi_x(\cL)]$ is given by $\rk(\cL)$. Therefore, the formal type of $\cL$ is determined by
the collection $$\{[\Psi_x(\cL)]\}_{x\in\p1-U}$$
(excluding the case when $U=\p1$ and $\cL$ is trivial). 
\end{remark*}

\begin{definition} A connection $\cL$ on $U$ is \emph{rigid} if $\cL$ is determined
by its formal type up to isomorphism: any bundle with connection
$\cL'$ on $U$ such that $\Psi_x(\cL)\simeq\Psi_x(\cL')$ for all $x\in\p1$ is isomorphic to $\cL$.
\label{df:rigid}
\end{definition}

\begin{example} Suppose $\kk=\C$ and that $\cL$ has regular singularities; that is, $\cL=(E_\cL,\nabla_\cL)$ can be extended to a vector
bundle $\overline E_\cL$ on $\p1$ equipped with a connection $\overline\nabla_\cL$ that has first-order poles:
$$\overline\nabla_\cL:\overline E_\cL\to\overline E_\cL\otimes\Omega_\p1(x_1+\dots+x_n),$$
where $\{x_1,\dots,x_n\}=\p1-U$. Then $[\Psi_x(\cL)]$ is determined by the monodromy
of $\nabla_\cL$ around $x$. The monodromy is defined up to conjugation, for
instance, it can be given in the Jordan form. Therefore, for regular connections on 
${\mathbb P}_\C^1$, Definition~\ref{df:rigid} reduces to the notion of rigidity given in the introduction.
\end{example}

\subsection{Fourier transform}
Recall the Fourier transform for $\D_\A1$-modules.  We can identify $\D_\A1$-modules with
modules over the algebra of polynomial differential operators 
$\kk\left\langle z,\dz\right\rangle$ (\emph{the Weyl algebra}). Here $z$ is
the coordinate on $\A1$.
Consider the Fourier automorphism 
$$\four:\kk\left\langle z,\dz\right\rangle\to\kk\left\langle z,\dz\right\rangle:\qquad
\four(z)=-\dz,\quad\four\left(\dz\right)=z.$$ 
It yields an autoequivalence of the category of $\kk\langle z,\dz\rangle$-modules (\emph{the Fourier transform})
$$M\mapsto\Four(M),$$ where $\Four(M)$ is isomorphic to $M$ as a vector space, but 
$\kk\langle z,\dz\rangle$ acts on it
through $\four$.

Now let $\cL$ be a connection on an open set $U\subset\A1$. 
Assume that $\cL$ is irreducible. Viewing $\cL$ as a $\D_U$-module, we obtain the Goresky-MacPherson extension $j_{!*}\cL$, where $j:U\hookrightarrow\A1$ is the open embedding. $j_{!*}\cL$ is an irreducible $\D_\A1$-module, therefore its Fourier transform
$\Four(j_{!*}\cL)$ is also irreducible. 

The $\D_\A1$-module $\Four(j_{!*}\cL)$ is smooth on a non-empty open subset $\ft{U}\subset\p1$;
that is, it gives a connection $\ft\cL$ on $\ft{U}$. 
Let us exclude the (essentially trivial) case when $\cL$ has rank one and its only singularity is a second order pole at infinity: in this case, $\Four(j_{!*}\cL)$ 
is supported at a single point, and $\ft\cL=0$. Then $$\Four(j_{!*}\cL)=\ft{j}_{!*}(\ft\cL),\quad\ft{j}:\ft{U}\hookrightarrow\A1$$
and $\ft\cL$ is an irreducible connection on $\ft{U}$. When it does not cause confusion, we call $\ft{\cL}$ the Fourier transform of $\cL$.

Fourier transform preserves rigidity. In $l$-adic settings, this was proved by N.~Katz using
the local Fourier transform constructed by G.~Laumon in \cite{La}. 
In the settings of bundles with connections, the local Fourier transform was constructed by S.~Bloch and H.~Esnault in \cite{BE}. 

\begin{theorem}[S.~Bloch, H.~Esnault] \label{th:fourierrigid}
Suppose $\cL$ is irreducible and rigid. Then so is its Fourier transform $\ft\cL$. \qed
\end{theorem}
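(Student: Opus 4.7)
The plan is to establish rigidity of $\ft\cL$ via the cohomological characterization of rigidity. For an irreducible connection $\cL$ on $U\subset\A1$ with open embedding $j:U\hookrightarrow\p1$, Schur's lemma for irreducible $\D$-modules gives $H^0(\p1,j_{!*}\END(\cL))=H^2(\p1,j_{!*}\END(\cL))=\kk$, while $H^1$ parametrizes first-order deformations of $\cL$ preserving the formal type at every puncture. Hence $\cL$ is rigid if and only if the \emph{index of rigidity}
$$\chi(\p1,j_{!*}\END(\cL))=2.$$
Since Fourier transform is an autoequivalence on holonomic $\D_{\A1}$-modules, $\ft\cL$ is automatically irreducible, and the same criterion applies to $\ft\cL$.

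First I would express this Euler characteristic, via the Euler--Poincar\'e formula for holonomic $\D$-modules on $\p1$, as $2\rk(\END\cL)$ minus a sum of local contributions:
$$\chi(\p1,j_{!*}\END(\cL))=2\rk(\END\cL)-\sum_{x\in\p1}\delta_x\bigl([\Psi_x(\END\cL)]\bigr),$$
where each $\delta_x$ is built from the rank, the dimension of the space of formal horizontal sections, and the irregularity of $\END\cL$ at $x$. The key feature is that $\delta_x$ depends only on the formal isomorphism class $[\Psi_x(\END\cL)]$, and hence only on the formal type of $\cL$.

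Next I would invoke the local Fourier transforms $\Four^{(x,\infty)}$, $\Four^{(\infty,x)}$, and $\Four^{(\infty,\infty)}$ constructed by Bloch and Esnault in \cite{BE} (following Laumon \cite{La} in the $\ell$-adic setting). These are equivalences between appropriate subcategories of formal connections, and Laumon's stationary phase decomposition expresses each formal type $\Psi_y(\ft\cL)$ as a direct sum of local Fourier transforms of pieces of the $\Psi_x(\cL)$. In particular, the formal type of $\ft\cL$, and therefore of $\END(\ft\cL)$, is determined by that of $\cL$ alone.

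The main step---and the main obstacle---is to verify the invariance of the index of rigidity under Fourier transform:
$$\sum_{x\in\p1}\delta_x\bigl([\Psi_x(\END\cL)]\bigr)=\sum_{y\in\p1}\delta_y\bigl([\Psi_y(\END\ft\cL)]\bigr).$$
Via stationary phase, this reduces to a local check for each pair of singularities related by a local Fourier transform: one tracks how ranks, slopes, and irregularities transform (for instance, slope $s\mapsto s/(s+1)$ under $\Four^{(x,\infty)}$ and $s\mapsto s/(s-1)$ under $\Four^{(\infty,\infty)}$) and matches the local contributions to $\delta$ case by case. The computation is mechanical but requires careful bookkeeping across the different slope ranges and across the three types of local Fourier transform. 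Once it is established, $\cL$ rigid (index $=2$) forces $\ft\cL$ to be rigid as well.
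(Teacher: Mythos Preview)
The paper does not give its own proof of this theorem; it is stated with a \qed and attributed to Bloch and Esnault \cite{BE}. Your outline is precisely their strategy: reduce rigidity to the numerical condition $\rig(\cL)=2$ (this is \cite[Theorems~4.7 and~4.10]{BE}, quoted here in Section~\ref{sc:rigidity}), and then show that the rigidity index is preserved by Fourier transform via the local Fourier transforms and the stationary phase decomposition. So your approach matches the cited source rather than diverging from anything in the present paper.

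Your sketch is honest about where the content lies: the ``main step'' is the whole theorem, and you have not carried it out. The slope-transformation rules you quote for $\Four^{(x,\infty)}$ and $\Four^{(\infty,\infty)}$ are correct, but turning them into an equality of the total $\delta$-contributions on the two sides is more than mechanical substitution. One must also track how the horizontal part---the $\rk(\cV^{hor})$ term in $\delta$---behaves under each local Fourier transform, and how the pieces from distinct finite singularities of $\cL$ recombine at $\infty$ on the transform side (and conversely). This is exactly where \cite{BE} spends its effort. As a plan your proposal is sound; as a proof it still owes the reader that computation, or a reference to it.
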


\subsection{Middle convolution}
Fix $\lambda\in\kk-\Z$. The corresponding \emph{Kummer local system} is 
$$\cK^\lambda=\left(\cO_{\A1-\{0\}},d+\lambda\frac{dz}{z}\right).$$ 
Up to isomorphism, $\cK^\lambda$ depends only on the image of $\lambda$ in $\kk/\Z$.

Let $\cL$ be an irreducible connection on an open subset $U\subset\p1$. Shrinking $U$ if necessary, we may assume that $U\subset\A1$. We then define the middle convolution 
$\cL\star_{mid}\cK^\lambda$
to be the inverse Fourier transform of $\ft\cL\otimes\cK^{-\lambda}$:
\begin{equation}
\ft{(\cL\star_{mid}\cK^\lambda)}=\ft\cL\otimes\cK^{-\lambda}.\label{eq:mid}
\end{equation}
This definition uses the isomorphism $\ft{(\cK^\lambda)}=\cK^{-\lambda}$ to rewrite
the convolution as a tensor product. 

\begin{remark*} \cite{Ka} contains a direct definition of middle convolution that does not
use the Fourier transform. The equivalence between this definition and \eqref{eq:mid} is
\cite[Proposition~2.10.5]{Ka}. Another approach to middle convolution \eqref{eq:mid} is sketched in Section~\ref{sc:mid}. 
\end{remark*}

Let us make \eqref{eq:mid} explicit. Consider again the Fourier transform 
$\Four(j_{!*}\cL)$. It is a $\D_\A1$-module. The tensor product 
$\Four(j_{!*}\cL)\otimes\cK^{-\lambda}$ is a $\D$-module on $\A1-\{0\}$. Consider  $j_0:\A1-\{0\}\hookrightarrow\A1$, and take
$$\Four^{-1}(j_{0,!*}(\Four(j_{!*}\cL)\otimes\cK^{-\lambda})).$$
This is a $\D_\A1$-module, and $\cL\star_{mid}\cK^\lambda$ is the corresponding connection.

Again, exclude the essentially trivial case when $\cL$ is a rank one connection which is either trivial or has two simple poles at $\infty$ and some point $x\in\A1$ with residues 
equal to $\lambda$ and $-\lambda$, respectively. Then $\cL\star_{mid}\cK^\lambda$ is
again an irreducible connection. Theorem~\ref{th:fourierrigid} immediately implies that
$\cL$ is rigid if and only if so is $\cL\star_{mid}\cK^\lambda$. Clearly,
$$\cL\simeq(\cL\star_{mid}\cK^\lambda)\star_{mid}\cK^{-\lambda}.$$

\subsection{Main theorem} 
Here is the main result of this paper, proved in Section~\ref{sc:proof}.

\begin{THEOREM}
Let $\cL$ be a connection on an open subset $U\subset\p1$. Suppose $\cL$ is irreducible and rigid, and that $\rk(\cL)>1$. Then at least one of the following conditions hold:

\begin{enumerate}
\item \label{cs:R}
For appropriate $\lambda\not\in\Z$ and a rank one connection $\ell$ on $U-\{\infty\}$, 
the middle convolution
$\HOM(\ell,\cL)\star_{mid}\cK^\lambda$ has rank smaller than $\cL$.

\item \label{cs:F}
For appropriate choice of $\infty\in\p1-U$ and a rank one connection $\ell$ on $U$,
the Fourier transform of $\HOM(\ell,\cL)$ has rank smaller than $\cL$.
\end{enumerate}
\label{th:maintheorem}
\end{THEOREM}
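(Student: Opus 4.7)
The plan is to argue by contradiction using the rigidity index. Assume that neither of the two reductions ever lowers the rank; then summing local inequalities at the singular points will force the index of rigidity to drop below $2$, contradicting the hypothesis that $\cL$ is rigid.

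First I would recall (or derive) that irreducibility and rigidity of $\cL$ are equivalent to
$$\rig(\cL) := \chi\bigl(\p1,\, j_{!*}\END(\cL)\bigr) = 2,$$
together with $H^0(\p1, j_{!*}\END(\cL)) = \kk$. By the de Rham Euler--Poincaré formula of Deligne--Malgrange this expands as
$$\rig(\cL) = (2-n)(\rk\cL)^2 \; + \sum_{x\in\p1-U}\delta_x(\cL),$$
where $n = \#(\p1-U)$ and the local term $\delta_x(\cL)$ depends only on the formal type $\Psi_x(\END(\cL))$: it combines the dimension of its horizontal sections with its irregularity. Note that $\delta_x$ is invariant under tensoring $\cL$ with a rank-one connection, and decomposes neatly over the Levelt--Turrittin summands of $\Psi_x(\cL)$.

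Next, I would record how rank changes under the two operations in the theorem, expressed through local data. For the middle convolution, I would use the formula
$$\rk\bigl(\cL\star_{mid}\cK^\lambda\bigr) \; = \; \sum_{x\in\p1-U}\bigl(\rk\cL - a_x(\lambda)\bigr) \; - \; \rk\cL,$$
where $a_x(\lambda)$ is the dimension of the generalized $\lambda$-eigenspace contribution from $\Psi_x(\cL)$ (adjusted by irregularity) -- the same shape as in Katz, with irregular summands contributing trivially at $\lambda$. The rank of $\ft{\cL}$ is controlled analogously via the local Fourier transforms of Bloch--Esnault: it is a linear function of the slopes and multiplicities of formal summands of $\Psi_x(\cL)$ at every $x$, with special behavior at the chosen $\infty\in\p1-U$. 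Tensoring by $\ell$ acts by shifting regular eigenvalues at each point and by modifying the exponential part of one irregular summand; this shift is exactly what lets the reductions of the theorem be \emph{optimized} pointwise.

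Now suppose for contradiction that neither reduction ever lowers the rank. For every choice of $\ell$, $\lambda$, and $\infty$, this is a local condition on the formal types: at each finite singularity $x$, no eigenvalue of the regular part of $\Psi_x(\cL)\otimes\ell^\vee$ can have multiplicity greater than half of $\rk\cL$, and at every candidate $\infty$ the slopes of $\Psi_\infty(\cL\otimes\ell^\vee)$ must be bounded to prevent Fourier rank drop. These inequalities translate into upper bounds
$$\delta_x(\cL) \;\leq\; (\rk\cL)^2 - c_x,$$
with constants $c_x \geq 1$ that collectively exceed $(n-2)(\rk\cL)^2 + 1$ once $\rk\cL>1$. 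Summing and comparing with the rigidity identity yields $\rig(\cL) \leq 1$, contradicting $\rig(\cL)=2$.

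The main obstacle is calibrating the local invariants: getting the right $\delta_x$, the exact rank-change formulas for middle convolution and Fourier transform in the presence of arbitrary irregular singularities, and choosing the optimal twist $\ell$ at each point so that the telescoping summation is tight. Once the book-keeping of Levelt--Turrittin summands and slopes is organized correctly, the global inequality is essentially forced; the technical heart of the argument is the pointwise optimization over $\ell$ and the combinatorial identity linking $\delta_x$ to the two rank-change formulas.
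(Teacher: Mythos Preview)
Your overall strategy---use $\rig(\cL)=2$ together with the rank-change formulas to force one of the two operations to lower the rank---is exactly the paper's. But what you have written is an outline, not a proof, and two of the missing pieces are not just bookkeeping.

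First, the claim ``at each finite singularity $x$, no eigenvalue of the regular part of $\Psi_x(\cL)\otimes\ell^\vee$ can have multiplicity greater than half of $\rk\cL$'' is wrong as stated. The correct rank formula for the middle convolution (Proposition~\ref{pp:rankradon} in the paper) reads
\[
\rk(\cL'\star_{mid}\cK^\lambda)=\sum_{x\in\A1-U}\delta(\Psi_x(\cL'))+\delta(\Psi_\infty(\cL')\otimes\cK_\infty^{-\lambda})-\rk\cL',
\]
with $\delta$ as in \eqref{eq:delta}; the failure of middle convolution to decrease rank is a constraint on the \emph{sum} $\sum_x\delta(\HOM(\cV_x,\Psi_x(\cL)))$ for an optimal local choice $\cV_x$, not a pointwise bound like ``at most half''. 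Moreover, in the irregular case the optimal local approximant $\cV_x$ need not have rank one, so the language of ``eigenvalues of the regular part'' is inadequate.

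Second, and more seriously, you are missing the structural dichotomy that organizes the argument. The paper chooses at each $x$ an irreducible $\cV_x$ minimizing $\delta(\HOM(\cV_x,\Psi_x(\cL)))/\rk\cV_x$ (Corollary~\ref{co:maxslope}), and then splits: if every $\cV_x$ has rank one, the middle-convolution inequality goes through exactly as in Katz (Propositions~\ref{pp:Case1a},~\ref{pp:Case1b}); if some $\cV_x$ has rank $>1$, a slope estimate (Lemma~\ref{lm:slope}) forces $\delta(\END(\Psi_x(\cL)))\ge(\rk\cL)^2$, and rigidity then implies there is \emph{at most one} such point (Proposition~\ref{pp:therecanbeonlyone}). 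That unique point is chosen as $\infty$, and a further slope computation (Lemmas~\ref{lm:irrslope},~\ref{lm:slope2}) shows the Fourier transform, after the right rank-one twist, drops the rank. Your proposal does not isolate this ``at most one bad point'' phenomenon, and without it there is no mechanism for deciding \emph{which} singularity to place at $\infty$ or why Fourier helps there; the vague ``slopes must be bounded to prevent Fourier rank drop'' does not capture the actual inequality \eqref{eq:needed}. The Levelt--Turrittin slope estimates in Lemmas~\ref{lm:irrslope}--\ref{lm:slope2} are the technical heart, not bookkeeping, and you should expect to prove analogues of them rather than hope they fall out of a telescoping sum.
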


\begin{remarks*} The Fourier transform can be thought of as a middle convolution on the multiplicative group. In this way, both cases~\eqref{cs:R},~\eqref{cs:F} involve middle convolution.

In case~\eqref{cs:F}, we use Fourier transform corresponding to some choice
of $\infty\in\p1$; the choice depends on $\cL$. Equivalently, we might fix $\infty\in\p1$, and use M\"obius transformations to shift the connection $\cL$. We then reformulate \eqref{cs:F}
as follows:
\begin{enumerate}
\item[(\ref{cs:F}')] \sl{There is a rank one connection $\ell$ on $U$ and a M\"obius
transformation $\phi:\p1\iso\p1$ such that $$\rk\ft{\left(\phi^*\HOM(\ell,\cL)\right)}<\rk(\cL).$$}
\end{enumerate}
\end{remarks*}

Theorem~\ref{th:maintheorem} yields a connection $\cL_1$ given by one of the two rules
$$\cL_1=\begin{cases}(\HOM(\ell,\cL)\star_{mid}\cK^\lambda,\quad&\text{case \eqref{cs:R} of Theorem~\ref{th:maintheorem}}\\
\ft{\HOM(\ell,\cL)},\quad&\text{case \eqref{cs:F} of Theorem~\ref{th:maintheorem}}
\end{cases}
$$
such that $\rk(\cL_1)<\rk\cL$. Note that $\cL_1$ is again irreducible and rigid (by Theorem~\ref{th:fourierrigid}), so either $\rk(\cL_1)=1$, or $\rk(\cL_1)$ can be decreased further by
Theorem~\ref{th:maintheorem}. Iterating, we eventually get to a rank one connection. This
proves the following claim.

\begin{corollary} Any rigid connection $\cL$ on open subset $U\subset\p1$ can be reduced to the trivial connection $(\cO,d)$ by iterating the following three operations:
\begin{itemize}
\item Tensor multiplication by a rank one connection $\ell$: $\cL\mapsto\cL\otimes\ell$;
\item Change of variable by a M\"obius transformation $\phi$: $\cL\mapsto\phi^*\cL$;
\item Fourier transform: $\cL\mapsto\ft{\cL}$.
\end{itemize}
Of course, $\cL$ can also be obtained from $(\cO,d)$ by these operations.
\qed
\end{corollary}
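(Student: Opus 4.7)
The plan is a straightforward induction on $\rk(\cL)$, with Theorem~\ref{th:maintheorem} providing the inductive step. The base case $\rk(\cL)=1$ is immediate: tensoring $\cL$ with its dual $\cL^\vee$ (itself a rank one connection) yields $(\cO,d)$. For the inductive step we may assume $\cL$ is irreducible, which is the hypothesis of Theorem~\ref{th:maintheorem}; a reducible rigid connection is handled one composition factor at a time.

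For $\rk(\cL)>1$, apply Theorem~\ref{th:maintheorem} to obtain a rank one connection $\ell$ together with either $\lambda\notin\Z$ (case~\eqref{cs:R}) or a choice of $\infty\in\p1-U$ (case~\eqref{cs:F}), producing $\cL_1$ with $\rk(\cL_1)<\rk(\cL)$. In case~\eqref{cs:R} we have $\cL_1=\HOM(\ell,\cL)\star_{mid}\cK^\lambda$, which by the defining identity \eqref{eq:mid} equals the inverse Fourier transform of $\ft{\HOM(\ell,\cL)}\otimes\cK^{-\lambda}$. A direct computation shows $\four^2(z)=-z$ and $\four^2(\dz)=-\dz$, so $\Four^2$ is pullback along the M\"obius transformation $z\mapsto -z$ and $\Four^{-1}=\Four\circ(z\mapsto -z)^*$; hence $\cL_1$ is a composition of the three listed operations. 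In case~\eqref{cs:F}, by the reformulation in the remarks following Theorem~\ref{th:maintheorem}, we may write $\cL_1=\ft{(\phi^*\HOM(\ell,\cL))}$ for some M\"obius $\phi$, which is manifestly of the required form.

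To iterate, I must check that $\cL_1$ remains irreducible and rigid. Irreducibility is preserved by each of the three operations: tensoring with a rank one connection is an auto-equivalence on connections, M\"obius pullback is an isomorphism of base varieties, and the Fourier transform $\Four\circ j_{!*}$ sends irreducible holonomic $\D_\A1$-modules to irreducibles (away from the exceptional case flagged in the text). Rigidity is preserved under Fourier transform by Theorem~\ref{th:fourierrigid}, and trivially preserved by the other two operations, which are invertible functors whose effect on the formal type $\{[\Psi_x(\cL)]\}_{x\in\p1}$ is controlled entirely by the auxiliary data $\ell$ or $\phi$. Thus Theorem~\ref{th:maintheorem} applies to $\cL_1$, and iterating produces a strictly decreasing sequence of ranks terminating at $1$, where the base case concludes the reduction. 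The parenthetical claim --- that $\cL$ can be reconstructed from $(\cO,d)$ by the same three operations --- follows by reversing each step: tensor by $\ell$ is inverted by tensor by $\ell^\vee$, a M\"obius by its inverse, and Fourier transform by $\Four^3=\Four\circ(z\mapsto -z)^*$.

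The only real obstacle is bookkeeping, namely verifying that irreducibility and rigidity propagate through the iteration. The substantive content --- the existence of a rank-decreasing step --- is packaged entirely into Theorem~\ref{th:maintheorem}; the corollary is essentially its unwinding.
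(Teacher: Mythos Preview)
Your proof is correct and follows the same approach as the paper: induct on rank, using Theorem~\ref{th:maintheorem} for the rank-decreasing step and Theorem~\ref{th:fourierrigid} to propagate rigidity. The paper's own argument is the short paragraph immediately preceding the corollary, and your version simply fills in details the paper leaves implicit --- most usefully, the explicit rewriting of the middle convolution in terms of the three listed operations via \eqref{eq:mid} and the identity $\Four^2=(z\mapsto -z)^*$.

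One small caveat: your aside that ``a reducible rigid connection is handled one composition factor at a time'' is not quite right as stated, since the three operations act on $\cL$ globally, not on its composition factors separately, and composition factors of a rigid connection need not themselves be rigid. But this is moot: the paper, like the rest of the literature, implicitly restricts to irreducible $\cL$ throughout (note the phrase ``$\cL_1$ is again irreducible and rigid'' just before the corollary), so you may simply drop that sentence.
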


\section{Connections and Fourier transform}

In this section, we remind the necessary statements about bundles with connections. 

\subsection{Euler-Poincar\'e formula}
Fix a point $x\in\p1$. For $\cV\in\DHol{K_x}$, we denote by $\irreg(\cV)$ the irregularity 
of $\cV$ and by
$$\slope(\cV)=\frac{\irreg(\cV)}{\rk(\cV)}$$
the slope of $\cV$. It is also convenient to introduce the following quantity:
\begin{equation}
\delta(\cV)=\irreg(\cV)+\rk(\cV)-\rk(\cV^{hor}),\label{eq:delta}
\end{equation}
where $\cV^{hor}\subset\cV$ is the maximal subbundle on which the connection is 
trivial. In other words, 
$$\rk(\cV^{hor})=\dim_\kk H^0_{dR}(K_x,\cV),\qquad H^0_{dR}(K_x,\cV)=\ker(\nabla_\cV:E_\cV\to E_\cV\otimes\Omega).$$

Let $\cL$ be a connection on an open subset $U\subset\p1$. Consider the $\D_\p1$-module $j_{!*}(\cL)$ for $j:U\hookrightarrow\p1$.
Denote by $H^i_{dR}(\p1,j_{!*}(\cL))$ its de Rham cohomology groups and by
$$\chi(j_{!*}(\cL))=\sum_{i=0}^2(-1)^i\dim H^i_{dR}(\p1,j_{!*}(\cL))$$
its Euler characteristic.

We need the Euler-Poincar\'e formula for the Euler characteristic:
\begin{proposition} Let $\cL$ and $j:U\hookrightarrow\p1$ be as above. Then
$$\chi(j_{!*}(\cL))=2\rk(\cL)-\sum_{x\in\p1-U}\delta(\Psi_x(\cL)).$$ \qed
\label{pp:EulerPoincare}
\end{proposition}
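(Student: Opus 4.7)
My plan is to deduce the formula from the classical Euler--Poincar\'e formula for the $!$-extension, together with a local analysis comparing $j_{!*}\cL$ with $j_!\cL$ at each puncture. As a first step, I would invoke the Deligne--Malgrange Euler--Poincar\'e formula for $j_!\cL$ on the complete curve $\p1$, which yields
$$\chi(\p1, j_!\cL) = \chi(U)\rk(\cL) - \sum_{x\in\p1-U}\irreg(\Psi_x(\cL)),$$
where $\chi(U) = 2 - |\p1-U|$. This encodes the topological Euler characteristic of $U$ together with the irregularity corrections at the punctures, and is a standard result in the theory of holonomic $\cD$-modules on curves.

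Next I would compare $j_{!*}\cL$ with $j_!\cL$. By definition $j_{!*}\cL$ is the image of the canonical morphism $j_!\cL \to j_*\cL$; the kernel and cokernel of this morphism are $\cD_\p1$-modules supported on the finite set $\p1-U$, so their Euler characteristics are just their $\kk$-dimensions. The key local claim I need is that at each puncture $x$, the extra dimension added in passing from $j_!\cL$ to $j_{!*}\cL$ equals $\rk\Psi_x(\cL)^{hor}$. To prove this I would use the canonical decomposition $\Psi_x(\cL) = \cV^{reg}\oplus\cV^{irr}$ into regular and purely irregular summands. For the purely irregular summand one has $(\cV^{irr})^{hor}=0$, and the three extensions $j_!,j_{!*},j_*$ all coincide, so there is no contribution. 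For the regular summand, Riemann--Hilbert (or a direct computation with the Deligne canonical lattice) identifies the dimension added with the dimension of monodromy invariants, which is precisely $\rk(\cV^{reg})^{hor}=\rk\Psi_x(\cL)^{hor}$.

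Combining these inputs yields
\begin{align*}
\chi(\p1, j_{!*}\cL) &= \chi(U)\rk(\cL) - \sum_x \irreg(\Psi_x\cL) + \sum_x \rk\Psi_x(\cL)^{hor}\\
&= 2\rk(\cL) - \sum_x\bigl(\rk(\cL) + \irreg(\Psi_x\cL) - \rk\Psi_x(\cL)^{hor}\bigr)\\
&= 2\rk(\cL) - \sum_x\delta(\Psi_x\cL),
\end{align*}
as claimed. I expect the main obstacle to be the local identification of the dimension gap $\dim(j_{!*}\cL/j_!\cL)_x$ with $\rk\Psi_x(\cL)^{hor}$: the regular case reduces to a short Riemann--Hilbert computation (mildly delicate when the monodromy is unipotent), and the purely irregular case rests on the coincidence of the three extensions for formal connections with no horizontal sections. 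Both are standard facts, so the work is ultimately careful bookkeeping of local contributions.
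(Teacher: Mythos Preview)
The paper does not actually prove this proposition: it is stated with a \qed\ and treated as a standard fact (the Euler--Poincar\'e formula for holonomic $\cD$-modules on a curve, as in Malgrange's book). So there is nothing to compare your argument against; you are supplying a derivation where the paper simply cites the result.

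Your derivation is essentially correct and is a standard way to obtain the formula. One point deserves more care. You write that the kernel and cokernel of $j_!\cL\to j_*\cL$ are supported on $\p1-U$ and that ``their Euler characteristics are just their $\kk$-dimensions.'' A $\cD$-module supported at a point is a finite sum of copies of $\delta_x$, which is \emph{not} finite-dimensional over $\kk$; what is finite is its length, and one has $\chi_{dR}(\delta_x)=-1$ (the de Rham complex of $\delta_x$ has cohomology only in degree one). Thus in the exact sequence $0\to K\to j_!\cL\to j_{!*}\cL\to 0$ one gets $\chi(j_{!*}\cL)=\chi(j_!\cL)-\chi(K)=\chi(j_!\cL)+\mathrm{length}(K)$, and this is why the contribution is \emph{added} even though $j_{!*}\cL$ is a quotient of $j_!\cL$. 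Your final formula has the correct sign, but the justification should be stated this way.

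The local identification $\mathrm{length}(K_x)=\rk\Psi_x(\cL)^{hor}$ is right. The formal decomposition $\Psi_x(\cL)=\cV^{reg}\oplus\cV^{irr}$ reduces it to the two cases you describe; for the purely irregular summand all three extensions coincide (its moderate nearby cycles vanish), and for the regular summand the nearby/vanishing cycles description gives $\mathrm{length}(K_x)=\dim\ker(T-1)=\rk(\cV^{reg})^{hor}$, which equals $\rk\Psi_x(\cL)^{hor}$ since $\cV^{irr}$ has no horizontal sections. With the sign remark above, your bookkeeping then yields the stated formula.
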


\subsection{Rigidity index} \label{sc:rigidity}
\begin{definition}
For $\cL$ as above, the \emph{rigidity index} of $\cL$ is given by
$$\rig(\cL)=\chi(j_{!*}\END(\cL)).$$
\end{definition}

\begin{remark} It is well known that $\rig(\cL)$ is even. Indeed, by the Verdier duality 
the vector spaces $H^0_{dR}(\p1,j_{!*}(\END(\cL)))$ and $H^2_{dR}(\p1,j_{!*}(\END(\cL))$ are dual (and therefore have equal dimension), while $H^1_{dR}(\p1,j_{!*}(\END(\cL)))$ carries a symplectic
form (and therefore has even dimension).
\end{remark}

The following statement is an extension of \cite[Theorem~1.1.2]{Ka} to the case of irregular singularities. 
\begin{proposition}[{\cite[Theorem~4.7, Theorem~4.10]{BE}}]
An irreducible connection $\cL$ is rigid if and only if $\rig(\cL)=2$. \qed
\end{proposition}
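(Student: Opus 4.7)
The plan is to interpret the three de Rham cohomology dimensions $h^i := \dim H^i_{dR}(\p1, j_{!*}\END(\cL))$ structurally. Since $\cL$ is irreducible, the middle extension $j_{!*}\cL$ is a simple $\D_{\p1}$-module, so Schur's lemma yields $\End(j_{!*}\cL) = \kk$; under the standard identification $H^0_{dR}(\p1, j_{!*}\END(\cL)) = \End(j_{!*}\cL)$, this gives $h^0 = 1$. The trace pairing $\END(\cL) \otimes \END(\cL) \to \cO_U$ is perfect, hence $j_{!*}\END(\cL)$ is Verdier self-dual, and duality yields $h^2 = h^0 = 1$. Consequently $\rig(\cL) = 2 - h^1$, and the claim reduces to the equivalence: $\cL$ is rigid if and only if $h^1 = 0$.

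For the direction ``$h^1 = 0 \Rightarrow \cL$ rigid'', I would exploit the Euler-Poincar\'e formula with an auxiliary connection. Let $\cL'$ be any irreducible connection on $U$ with $\Psi_x(\cL') \simeq \Psi_x(\cL)$ for all $x \in \p1$. Then $\HOM(\cL, \cL')$ and $\END(\cL)$ have identical formal types at every point, so Proposition~\ref{pp:EulerPoincare} gives
$$\chi\bigl(j_{!*}\HOM(\cL, \cL')\bigr) = \chi\bigl(j_{!*}\END(\cL)\bigr) = 2.$$
Combined with the Verdier duality identification $h^2(j_{!*}\HOM(\cL, \cL')) = h^0(j_{!*}\HOM(\cL', \cL))$ and the bound $h^1 \ge 0$, this yields
$$h^0(j_{!*}\HOM(\cL, \cL')) + h^0(j_{!*}\HOM(\cL', \cL)) \ge 2.$$
Each summand is at most $1$ by Schur's lemma applied to the simple middle extensions $j_{!*}\cL, j_{!*}\cL'$, so both equal $1$, producing a nonzero morphism between two irreducible connections of equal rank -- necessarily an isomorphism. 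Hence $\cL \simeq \cL'$, establishing rigidity.

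The converse ``$\cL$ rigid $\Rightarrow h^1 = 0$'' is the harder direction. I would interpret $H^1(\p1, j_{!*}\END(\cL))$ as the tangent space at $[\cL]$ of the moduli of irreducible connections on $U$ with the fixed formal type of $\cL$; the intermediate extension $j_{!*}$ is precisely what encodes the formal-type-preserving condition at each singular point. To convert a nonzero class in $H^1$ into a contradiction, I would verify unobstructedness: splitting $\END(\cL) = \cO \oplus \END^0(\cL)$ via the trace and using Schur yields $H^0(j_{!*}\END^0(\cL)) = 0$, whence by Verdier duality $H^2(j_{!*}\END^0(\cL)) = 0$, where the genuine obstructions live. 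A nonzero class in $H^1$ would then integrate to a positive-dimensional algebraic family of pairwise non-isomorphic connections sharing the formal type of $\cL$, directly contradicting rigidity. The main obstacle is the rigorous setup of formal-type-preserving deformation theory for \emph{irregular} connections -- tracking Stokes data and local normal forms in families -- which is precisely the content of \cite{BE} and reduces the integration step to standard $\D$-module deformation arguments.
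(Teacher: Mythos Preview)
The paper does not give its own proof of this proposition: it is stated with a citation to \cite{BE} and closed with a \qed, so the result is imported wholesale from Bloch--Esnault and there is no in-paper argument to compare against. The paper's subsequent remark, that $\rig(\cL)=2-\dim H^1_{dR}(\p1,j_{!*}\END(\cL))$ for irreducible $\cL$, records exactly the $h^0=h^2=1$ computation you carry out.

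Your sketch has the standard shape. Two comments. First, in the direction ``$\rig(\cL)=2\Rightarrow$ rigid'' you restrict to \emph{irreducible} competitors $\cL'$, but Definition~\ref{df:rigid} quantifies over all $\cL'$ of the given formal type. The repair is immediate: drop the irreducibility hypothesis on $\cL'$, run the same Euler--Poincar\'e computation to get $h^0(j_{!*}\HOM(\cL,\cL'))+h^0(j_{!*}\HOM(\cL',\cL))\ge 2$, and note that since $\cL$ is irreducible and $\rk\cL=\rk\cL'$, any nonzero morphism in either direction is an isomorphism (its kernel or cokernel would be an $\cO_U$-torsion module with connection, hence zero). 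Second, for the converse you correctly locate the substance in constructing a moduli space of connections with prescribed \emph{irregular} formal type whose tangent space at $[\cL]$ is $H^1_{dR}(\p1,j_{!*}\END(\cL))$; your unobstructedness argument via the trace splitting and Verdier self-duality is the right outline, and the delicate part---making the formal-type constraint precise in families with irregular singularities---is exactly what \cite[Theorem~4.10]{BE} supplies.
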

\begin{remark*} For irreducible $\cL$, we have $$\rig(\cL)=2-\dim H^1_{dR}(\p1,j_{!*}(\END(\cL)).$$ Therefore, two is the largest possible value of $\rig(\cL)$
(and the only positive value). In \cite{Ka}, local systems satisfying $\rig(\cL)=2$ are called
\emph{cohomologically rigid}, while those satisfying Definition \ref{df:rigid} are \emph{physically rigid}. 
\end{remark*}

\subsection{Rank of the Fourier transform}
Suppose now that $\cL$ is a connection on an open subset $U\subset\A1$. Consider
the Fourier transform $\Four(j_{!*}(\cL))$ for $j:U\hookrightarrow\A1$. We want to find the
(generic) rank of the Fourier transform, that is, $\rk\ft\cL$.

\begin{proposition}[{\cite[Proposition V.1.5]{Mal}}]\label{pp:rankfourier}
Denote by $\Psi_\infty(\cL)^{>1}\subset\Psi_\infty(\cL)$ the maximal submodule of
$\cL$ whose irreducible components all have slopes greater than one. Then
$$\rk(\ft\cL)=\sum_{x\in\A1-U}\delta(\Psi_x(\cL))+
\irreg(\Psi_\infty(\cL)^{>1})-\rk(\Psi_\infty(\cL)^{>1}).$$\qed
\end{proposition}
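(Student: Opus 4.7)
The plan is to compute $\rk(\ft\cL)$ by identifying the generic fiber of $\Four(j_{!*}\cL)$ with the top de Rham cohomology of an exponentially twisted version of $j_{!*}\cL$, and then evaluating the resulting Euler characteristic via Proposition~\ref{pp:EulerPoincare}. For $c\in\kk$ set $\cE^c\eqd(\cO_{\A1},d+c\,dz)$, a rank-one connection smooth on all of $\A1$ and of slope $1$ at $\infty$. Because $\four(z)=-\dz$, the fiber $\Four(M)_{\ft z}$ at a point $\ft z\in\kk$ equals the cokernel $M/(\dz+\ft z)M$, which is the degree-one de Rham cohomology of $M\otimes\cE^{\ft z}$. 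Applied to $M=j_{!*}\cL$, irreducibility (together with Verdier duality) forces the remaining de Rham cohomology to vanish for all but finitely many $\ft z$, so for generic $\ft z$
$$\rk(\ft\cL)=-\chi\bigl(j_{!*}(\cL\otimes\cE^{\ft z})\bigr).$$

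I then apply Proposition~\ref{pp:EulerPoincare} to $\cL'\eqd\cL\otimes\cE^{\ft z}$, a connection on $U$. Because $\cE^{\ft z}$ is smooth on the whole of $\A1$, at every finite $x\in\A1-U$ one has $\Psi_x(\cL')\simeq\Psi_x(\cL)$ and hence $\delta(\Psi_x(\cL'))=\delta(\Psi_x(\cL))$; all the substantive work is therefore in computing $\delta(\Psi_\infty(\cL'))$. For that, I decompose $\Psi_\infty(\cL)$ via Levelt--Turrittin into formal summands of well-defined slope, each slope-$1$ summand carrying a leading exponential coefficient. Tensoring by $\cE^{\ft z}$ leaves every slope-$>1$ summand unchanged (its dominant exponential swallows the slope-$1$ twist), turns every slope-$<1$ summand into a slope-$1$ summand of the same rank, and shifts the leading coefficient of every slope-$1$ summand by $\ft z$. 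For $\ft z$ outside the finite set of leading-coefficient collisions, no slope-$0$ summand appears, so $\rk\bigl((\Psi_\infty(\cL'))^{hor}\bigr)=0$; additivity of irregularity together with $\irreg=\slope\cdot\rk$ then yields
$$\delta(\Psi_\infty(\cL'))=\irreg(\Psi_\infty(\cL)^{>1})-\rk(\Psi_\infty(\cL)^{>1})+2\rk(\cL).$$

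Substituting into the Euler--Poincar\'e formula, the two copies of $2\rk(\cL)$ cancel and the claimed identity drops out. The main obstacle is the local-formal analysis at $\infty$: justifying the described Levelt--Turrittin behaviour under tensoring with $\cE^{\ft z}$, and pinning down the finite bad locus in the $\ft z$-line (leading-coefficient collisions at $\infty$ together with the finitely many $\ft z$ at which the outer de Rham cohomology groups can jump). Given this local-formal statement, the remaining Euler--Poincar\'e bookkeeping is mechanical.
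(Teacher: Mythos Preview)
The paper does not actually prove this proposition; it is quoted from Malgrange with a bare \qed. Your argument is correct and supplies a self-contained proof using only the paper's Euler--Poincar\'e formula (Proposition~\ref{pp:EulerPoincare}); it is in fact the exact analogue of the paper's own proof of Proposition~\ref{pp:rankradon}, with the exponential $\cE^{\ft z}$ playing the role that the shifted Kummer sheaf $s_x^*\cK^\lambda$ plays there.

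One small point deserves to be made explicit. The cokernel $M/(\dz+\ft z)M$ is $H^1_{dR}(\A1,\,j_{!*}\cL\otimes\cE^{\ft z})$, i.e.\ cohomology over $\A1$, whereas Proposition~\ref{pp:EulerPoincare} computes $\chi$ over $\p1$ for the minimal extension. The two Euler characteristics differ by exactly $\rk\bigl(\Psi_\infty(\cL')^{hor}\bigr)$: comparing the $\A1$-form of Euler--Poincar\'e with Proposition~\ref{pp:EulerPoincare} gives
\[
\chi\bigl(\p1,\,j^{\p1}_{!*}\cL'\bigr)-\chi\bigl(\A1,\,j^{\A1}_{!*}\cL'\bigr)
=\rk(\cL')-\delta(\Psi_\infty(\cL'))+\irreg(\Psi_\infty(\cL'))
=\rk\bigl(\Psi_\infty(\cL')^{hor}\bigr).
\]
You do establish that this horizontal part vanishes for generic $\ft z$ (no slope-$0$ summand survives the twist), so the passage is justified; but the sentence ``so for generic $\ft z$, $\rk(\ft\cL)=-\chi(j_{!*}(\cL\otimes\cE^{\ft z}))$'' silently jumps from $\A1$ to $\p1$, and it would be cleaner to insert this one-line comparison there rather than leave it implicit. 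With that adjustment the bookkeeping is complete and the final substitution into Proposition~\ref{pp:EulerPoincare} yields the stated formula exactly as you wrote.
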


Similarly, we have a formula for the rank of the middle convolution. In the case of regular
singularities, this is \cite[Corollary~3.3.7]{Ka} (in $l$-adic settings).

\begin{proposition}\label{pp:rankradon} Denote by $\cK_\infty^\lambda\in\DHol{K_\infty}$
the `Kummer local system at infinity' given by $$\cK_\infty^\lambda=(K_\infty,d+\lambda\frac{d\zeta}{\zeta}),$$ 
where $\zeta$ is a local coordinate at $\infty\in\p1$. Note that 
the residue of $\cK^\lambda_\infty$ is $\lambda$, so $\cK^\lambda_\infty\simeq\Psi_\infty(\cK^{-\lambda})$. Then
$$\rk(\cL\star_{mid}\cK^\lambda)=\sum_{x\in\A1-U}\delta(\Psi_x(\cL))+
\delta(\Psi_\infty(\cL)\otimes\cK^{-\lambda}_\infty)-\rk\cL.$$
\end{proposition}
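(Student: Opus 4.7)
The plan is to compute $\rk(\cL\star_{mid}\cK^\lambda)$ by realizing it as a middle de~Rham cohomology and applying Proposition~\ref{pp:EulerPoincare}. Using the convolution description of $\cL\star_{mid}\cK^\lambda$ equivalent to \eqref{eq:mid} (and sketched in Section~\ref{sc:mid}), the stalk at a generic $t\in\A1$ is
$$H^1_{dR}\bigl(\p1,\,j_{!*}(\cL\otimes\sigma_t^*\cK^\lambda)\bigr),\qquad\sigma_t\colon z\mapsto t-z.$$
For generic $t$, the connection $\cL\otimes\sigma_t^*\cK^\lambda$ is irreducible (since $\cL$ is) and nontrivial (it picks up a regular singularity at $t$ with monodromy $e^{2\pi i\lambda}\cdot\id\neq\id$), so its middle extension satisfies $H^0=H^2=0$ by Verdier duality. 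Hence $\rk(\cL\star_{mid}\cK^\lambda)=-\chi(j_{!*}(\cL\otimes\sigma_t^*\cK^\lambda))$.

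I then apply Proposition~\ref{pp:EulerPoincare} to $\cL\otimes\sigma_t^*\cK^\lambda$, which has rank $\rk\cL$ and singular locus $(\p1\setminus U)\cup\{t\}$, and compute the local $\delta$ invariant at each singular point. At $x\in\A1\setminus U$ with $x\neq t$, the pulled-back Kummer is smooth, so the contribution is $\delta(\Psi_x\cL)$. At $x=t\in U$, the connection $\cL$ is smooth while $\sigma_t^*\cK^\lambda$ has a regular singularity of residue $\lambda\notin\Z$, so $\Psi_t(\cL\otimes\sigma_t^*\cK^\lambda)$ is a direct sum of $\rk\cL$ copies of the Kummer system, with no horizontal sections and no irregularity, giving $\delta=\rk\cL$. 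At $x=\infty$, a direct computation in the local coordinate $\zeta=1/z$ shows $\Psi_\infty(\sigma_t^*\cK^\lambda)=\cK^{-\lambda}_\infty$ (the residue at $\infty$ flips sign), so the contribution is $\delta(\Psi_\infty\cL\otimes\cK^{-\lambda}_\infty)$.

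Substituting into Proposition~\ref{pp:EulerPoincare} yields
$$\chi=2\rk\cL-\sum_{x\in\A1\setminus U}\delta(\Psi_x\cL)-\rk\cL-\delta(\Psi_\infty\cL\otimes\cK^{-\lambda}_\infty),$$
and negating gives exactly the formula claimed in Proposition~\ref{pp:rankradon}.

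The main obstacle is justifying the cohomological stalk formula for $\cL\star_{mid}\cK^\lambda$ in the irregular setting, starting from the Fourier-transform definition \eqref{eq:mid}. The way to do this is to use that the Fourier transform intertwines the additive convolution on $\A1$ with tensor product (together with $\ft{\cK^\lambda}=\cK^{-\lambda}$), and that middle extension is compatible with these operations on the irreducible nontrivial locus; this reproduces Katz's direct definition even without regularity hypotheses on $\cL$. Once this identification is in hand, the rest of the argument is a direct application of Euler--Poincar\'e and the two special $\delta$-computations at $t$ and $\infty$ above.
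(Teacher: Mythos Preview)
Your proof is correct and follows essentially the same approach as the paper: identify the fiber of the middle convolution at a point of $U$ with $H^1_{dR}$ of the Goresky--MacPherson extension of $\cL\otimes s_t^*\cK^\lambda$, observe that $H^0$ and $H^2$ vanish, and apply the Euler--Poincar\'e formula. The paper is terser---it cites \cite[Corollary~2.8.5]{Ka} and Section~\ref{sc:mid} for the stalk formula and leaves the local $\delta$-computations implicit---whereas you spell out the contributions at $t$ and $\infty$ explicitly; but the argument is the same.
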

\begin{proof}
For any point $x\in U$, the fiber of $\cL\star_{mid}\cK^\lambda$ over $x$ equals
\begin{equation}
(\cL\star_{mid}\cK^\lambda)_x=H^1_{dR}(\p1,j_{!*}(\cL\otimes s_x^*(\cK^\lambda))),\label{eq:conv}
\end{equation}
where $j:U\hookrightarrow\p1$ is the embedding and $s_x:\p1\iso\p1$ is given by $y\mapsto x-y$
(so $s_x^*\cK^\lambda$ has regular singularities at $x$ and $\infty$). The formula 
\eqref{eq:conv} is essentially \cite[Corollary 2.8.5]{Ka}. It is easy to prove if one views
the middle convolution as an integral transform, as in Section~\ref{sc:mid}.

Clearly, $H^i(\p1,j_{!*}(\cL\otimes s_x^*(\cK^\lambda)))=0$ for $i=0,2$, so
$$\rk(\cL\star_{mid}\cK^\lambda)=-\chi(j_{!*}(\cL\otimes s_x^*(\cK^\lambda))).$$
It remains to apply Proposition~\ref{pp:EulerPoincare}.
\end{proof}

\section{Proof of Theorem~\ref{th:maintheorem}}\label{sc:proof}

\subsection{Outline of proof}
For every singular point $x\in\p1-U$, consider the formal type $\Psi_x(\cL)\in\DHol{K_x}$ of $\cL$
at $x$. Choose an irreducible connection
$\cV_x\in\DHol{K_x}$ that minimizes $$\frac{\delta(\HOM(\cV_x,\Psi_x(\cL)))}{\rk(\cV_x)}$$ 
(this choice is described in Corollary~\ref{co:maxslope}).

\begin{example} \label{ex:regularmultipliers}
Suppose $\cL$ has regular singularities. Then $\rk(\cV_x)=1$, and $\cV_x$ has regular singularities. Therefore, $\cV_x\simeq(K_x,d+\lambda\frac{dz}{z})$, where $\lambda$ is chosen so as to maximize $\dim\Hom(\cV_x,\Psi_x(\cL))$. 
Here $z$ is a local coordinate at $x$. Explicitly, we can write $\Psi_x(\cL)\simeq(K_x^r,d+R\frac{dz}{z})$, where $R$ is an $r\times r$ matrix with constant 
coefficients such that no two eigenvalues of $R$ differ by a non-zero integer. 
Then $\lambda$ is the eigenvalue of $R$ with maximal geometric multiplicity (that is, the eigenspace of $\lambda$ has maximal dimension).
 
If $\kk=\C$, we can simply say that $\cV_x$ is given by the eigenvalue of the monodromy of $\cL$ with maximal geometric multiplicity. 
\end{example} 

\emph{Case I:} Suppose $\rk(\cV_x)=1$ for all $x\in\p1-U$. (By Example~\ref{ex:regularmultipliers}, this is true if $\cL$ has regular singularities, so this is the only case appearing in the middle convolution algorithm of \cite{Ka}.)
It makes sense to talk about $\res(\cV_x)\in\kk/\Z$.

\emph{Case Ia:} Suppose $\sum_x\res(\cV_x)\in\Z$. Then one can find a local system 
$\ell$ on $U$ such that $\Psi_x(\ell)\simeq \cV_x$. One can easily see from the Euler-Poincar\'e formula that either $\Hom(\cL,\ell)$ or $\Hom(\ell,\cL)$ is non-zero (Proposition~\ref{pp:Case1a}). 
Since $\cL$ is irreducible, this implies $\cL\simeq\ell$, so $\rk(\cL)=1$, which contradicts the
assumptions.

\emph{Case Ib:} Suppose $\lambda=\sum_x\res(\cV_x)\not\in\Z$. Shrinking $U$ if necessary, we may assume that $\infty\not\in U$. Then there is a rank one connection
$\ell$ on $U$ such that
\begin{equation}
\Psi_x(\ell)\simeq\begin{cases} \cV_x, x\in\A1-U\cr \cV_\infty\otimes\cK^{-\lambda}_\infty,x=\infty,\end{cases}
\label{eq:twistedell}
\end{equation}
for $\cK^\lambda_\infty$ as in Proposition~\ref{pp:rankradon}.
It follows from Proposition~\ref{pp:rankradon}
that $\cL$ satisfies Theorem~\ref{th:maintheorem}\eqref{cs:R} 
for this $\ell$ (Proposition~\ref{pp:Case1b}).

\emph{Case II:} Suppose $\rk(\cV_x)>1$ for some $x\in\p1-U$. We show (Proposition~\ref{pp:therecanbeonlyone}) that
there is unique $x$ with this property. Choose it as $\infty$. Then choose $\ell$ to be 
a rank one connection on $U$ that satisfies $\Psi_x(\ell)\simeq \cV_x$ for
$x\in\A1-U$, and such that  
$$\HOM(\Psi_\infty(\ell),\cV_\infty)\in\DHol{K_\infty}$$ 
has non-integer slope. It follows from Proposition~\ref{pp:rankfourier} that 
$\cL$ satisfies Theorem~\ref{th:maintheorem}\eqref{cs:F} for this $\ell$ (Proposition~\ref{pp:Case2}). \qed

\begin{remark} \label{rm:approximation}
Let us discuss the condition on $\ell$ in Case II. Choose
$\ell_\infty\in\DHol{K_\infty}$ to be a rank one connection which is the
`best approximation' of $\cV_\infty$ in the sense that it minimizes 
$$\slope\HOM(\ell_\infty,\cV_\infty).$$
It is clear that the minimal slope is not an integer. Note that $\ell_\infty$ is not unique;
in particular, it can be tensored by a rank one bundle with regular connection. This means
that $\res(\ell_\infty)$ is unrestricted, and so $\ell_\infty$ can be chosen so that
$$\res\ell_\infty+\sum_{x\in\A1-U}\res\cV_x\in\Z.$$
We can then find $\ell$ with $\Psi_x(\ell)\simeq\cV_x$ for $x\in\A1-U$ and 
$\Psi_\infty(\ell)\simeq\ell_\infty$.

More explicitly,  let $\zeta$ 
be a local coordinate at $\infty$, and $r=\rk(\cV_\infty)$. We apply the well-known
description of connections on a punctured formal disk (see for instance
\cite[Theorem III.1.2]{Mal}). Since $\cV_\infty$ is irreducible, there exists
a ramified extension $$\kk((\zeta^{1/r}))\supset\kk((\zeta))=K_\infty$$ and a differential
form $\mu\in\kk((\zeta^{1/r}))d\zeta$ such that $$\cV_\infty\simeq(\kk((\zeta^{1/r})),d+\mu).$$
Choose a differential form $\mu_\ell\in\kk((\zeta))d\zeta$ as a `best approximation' of
$\mu$ in the sense that the leading term of $\mu-\mu_\ell$ is a fractional power of $\zeta$. 
Then take $$\ell_\infty=(K_\infty,d+\mu_\ell).$$
\end{remark}
 
\subsection{Details of the proof: Case I}
Let us fill in the gaps in the above outline. We start with some local calculations. Fix
$x\in\p1$. 

Recall that for $\cV\in\DHol{K_x}$, $\delta(\cV)$ is defined by \eqref{eq:delta}.
It is obvious that $\delta$ is semiadditive:
\begin{lemma} For a short exact sequence $$0\to \cV_1\to \cV\to \cV_2\to 0$$ in $\DHol{K_x}$, we have
$$\delta(\cV)\ge\delta(\cV_1)+\delta(\cV_2).$$
\end{lemma}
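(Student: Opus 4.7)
The plan is to unpack $\delta$ into its three constituent pieces and check how each of them behaves on a short exact sequence, then assemble the inequality. Concretely, for $0\to\cV_1\to\cV\to\cV_2\to 0$ in $\DHol{K_x}$ I want to compare the three quantities $\irreg$, $\rk$, and $\rk(\cdot^{hor})$ separately.

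The first two pieces are easy: $\rk$ is obviously additive on short exact sequences, and $\irreg$ is also additive on short exact sequences in $\DHol{K_x}$ (this is a standard property of the irregularity; it follows, e.g., from the description of connections on the punctured formal disk via the Turrittin--Levelt decomposition and the fact that $\irreg(\cV)$ equals the sum of the slopes of $\cV$ counted with multiplicity, or equivalently from its interpretation as a local Euler characteristic). So the combination $\irreg(\cV)+\rk(\cV)$ is strictly additive on the sequence.

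The only piece that is not strictly additive is $\rk(\cV^{hor})$. Here the point is that $\cV^{hor}=H^0_{dR}(K_x,\cV)=\ker\nabla_\cV$, and taking kernels of the connection is a left exact functor. Applying the snake/long exact sequence to the given short exact sequence of connections gives
\begin{equation*}
0\to\cV_1^{hor}\to\cV^{hor}\to\cV_2^{hor},
\end{equation*}
from which $\rk(\cV^{hor})\le\rk(\cV_1^{hor})+\rk(\cV_2^{hor})$. Combining this with additivity of $\irreg+\rk$ yields
\begin{equation*}
\delta(\cV)-\delta(\cV_1)-\delta(\cV_2)=\rk(\cV_1^{hor})+\rk(\cV_2^{hor})-\rk(\cV^{hor})\ge 0,
\end{equation*}
which is what we want. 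There is no real obstacle here; the one input I am taking on faith is additivity of $\irreg$ on short exact sequences, but this is a standard fact about holonomic $\cD$-modules on the punctured formal disk and can simply be cited.
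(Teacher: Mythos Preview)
Your proof is correct and follows essentially the same approach as the paper: decompose $\delta$ into $\rk$, $\irreg$, and $\rk(\cdot^{hor})$, note that the first two are additive on short exact sequences, and that the third is subadditive by left exactness of $H^0_{dR}$.
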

\begin{proof}
Indeed, 
\begin{align*}
\rk(\cV)&=\rk(\cV_1)+\rk(\cV_2),\\
\irreg(\cV)&=\irreg(\cV_1)+\irreg(\cV_2),\\
\dim H^0_{dR}(K_x,\cV)&\le\dim H^0_{dR}(K_x,\cV_1)+\dim H^0_{dR}(K_x,\cV_2).
\end{align*}
\end{proof}

\begin{corollary} For any $\cV\in\DHol{K_x}$, there is irreducible $\cV'\in\DHol{K_x}$ such that
$$\delta(\END(\cV))\ge\frac{\rk(\cV)}{\rk(\cV')}\delta(\HOM(\cV',\cV)).$$
\label{co:maxslope}
\end{corollary}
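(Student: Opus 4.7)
The plan is to combine the semiadditivity lemma just proved with a weighted-average argument over the Jordan--H\"older factors of $\cV$. Fix a Jordan--H\"older filtration $0 = \cV_0 \subset \cV_1 \subset \cdots \subset \cV_n = \cV$ in $\DHol{K_x}$, with irreducible subquotients $\cW_i = \cV_i/\cV_{i-1}$. The decisive move is to expand $\END(\cV) = \cV^\vee \otimes \cV$ by filtering the \emph{first} factor: dualizing the filtration of $\cV$ produces a filtration of $\cV^\vee$ with subquotients $\cW_i^\vee$, and tensoring with $\cV$ produces a filtration of $\END(\cV)$ whose successive subquotients are $\cW_i^\vee \otimes \cV = \HOM(\cW_i,\cV)$.

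Applying the semiadditivity lemma repeatedly along this filtration gives
$$\delta(\END(\cV)) \;\ge\; \sum_{i=1}^n \delta(\HOM(\cW_i,\cV)).$$
Since $\irreg\ge 0$ and horizontal sections form a subspace, each $\delta(\HOM(\cW_i,\cV))$ is nonnegative; moreover $\rk(\cV) = \sum_i \rk(\cW_i)$. The elementary weighted-average inequality $\min_i (a_i/w_i)\le (\sum_i a_i)/(\sum_i w_i)$ then produces an index $i_0$ with
$$\frac{\delta(\HOM(\cW_{i_0},\cV))}{\rk(\cW_{i_0})} \;\le\; \frac{\sum_i \delta(\HOM(\cW_i,\cV))}{\rk(\cV)} \;\le\; \frac{\delta(\END(\cV))}{\rk(\cV)},$$
and setting $\cV' \eqd \cW_{i_0}$, which is irreducible by construction, gives the desired inequality after clearing denominators.

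The main conceptual point is to filter the \emph{first} (dualized) factor of $\END(\cV)=\cV^\vee\otimes\cV$: this is what produces subquotients of the precise form $\HOM(\cW_i,\cV)$ whose $\delta$-values match the right-hand side of the target inequality. Filtering the second factor instead would give subquotients $\cV^\vee\otimes\cW_i = \HOM(\cV,\cW_i)$, which do not assemble into the estimate we want. Once the correct filtration is identified, the remaining averaging step is routine.
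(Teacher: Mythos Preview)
Your proof is correct and follows essentially the same approach as the paper: both filter $\END(\cV)$ by the Jordan--H\"older factors of one tensor factor, apply the semiadditivity lemma to obtain $\delta(\END(\cV))\ge\sum_i\delta(\HOM(\cW_i,\cV))$, and then select the factor minimizing $\delta(\HOM(\cW_i,\cV))/\rk(\cW_i)$. Your presentation is more explicit about the filtration and the weighted-average step, but the argument is the same; note that the nonnegativity of $\delta$ you invoke is true but not actually needed for the averaging inequality.
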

\begin{proof}
Let $\cV_1,\dots, \cV_k$ be the irreducible components of $\cV$ (with multiplicity). Take $\cV'$ to be the $\cV_i$ that minimizes
\begin{equation}
\frac{\delta(\HOM(\cV',\cV))}{\rk{\cV'}}. \label{eq:deltatensor}
\end{equation}
Semi-additivity of $\delta$ implies that
$$\delta(\END(\cV))\ge\sum_i\delta(\HOM(\cV_i,\cV)),$$
and Corollary~\ref{co:maxslope} follows. 
\end{proof}

\begin{remark*} It is easy to see that this choice of $\cV'$ actually minimizes 
\eqref{eq:deltatensor}
over all $\cV'\in\DHol{K_x}$. 
Also, if irreducible $\cV'\in\DHol{K_x}$ minimizes \eqref{eq:deltatensor}, then 
$\cV'$ is a component of $\cV$.
\end{remark*}

Now let $\cL$ be as in Theorem \ref{th:maintheorem}; for every $x\in\p1-U$, Corollary \ref{co:maxslope} yields 
an irreducible object $\cV_x\in\DHol{K_x}$ such that
$$\delta(\END(\Psi_x(\cL)))\ge\frac{\rk(\cL)}{\rk{\cV_x}}\delta(\HOM(\cV_x,\Psi_x(\cL))).$$ It remains
to prove the following statements:

\begin{proposition}[Theorem~\ref{th:maintheorem}, Case Ia] Suppose there is a rank one local system $\ell$ on $U$ such that
$\cV_x=\Psi_x(\ell)$ for every $x\in\p1-U$. Then either $\Hom(\ell,\cL)$ or $\Hom(\cL,\ell)$ is non-zero.
\label{pp:Case1a}
\end{proposition}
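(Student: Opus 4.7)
The plan is to compute $\chi(j_{!*}\HOM(\ell,\cL))$ via Proposition~\ref{pp:EulerPoincare}, show it is strictly positive using the rigidity hypothesis $\rig(\cL)=2$, and then identify the forced nonvanishing $H^0$ or $H^2$ with $\Hom(\ell,\cL)$ or $\Hom(\cL,\ell)$, respectively.

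First I apply Proposition~\ref{pp:EulerPoincare} to $\END(\cL)$, which has rank $\rk(\cL)^2$. Combined with $\rig(\cL)=2$ this produces the global identity $\sum_{x\in\p1-U}\delta(\END(\Psi_x(\cL)))=2\rk(\cL)^2-2$. Since $\rk(\cV_x)=1$ throughout Case~I, Corollary~\ref{co:maxslope} specializes to the linear estimate $\delta(\END(\Psi_x(\cL)))\ge\rk(\cL)\cdot\delta(\HOM(\cV_x,\Psi_x(\cL)))$. Summing over $x$ and dividing by $\rk(\cL)$ gives $\sum_x\delta(\HOM(\cV_x,\Psi_x(\cL)))\le 2\rk(\cL)-2/\rk(\cL)$. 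By the hypothesis on $\ell$, $\Psi_x(\HOM(\ell,\cL))=\HOM(\cV_x,\Psi_x(\cL))$ at each singular point, so Proposition~\ref{pp:EulerPoincare} applied to $\HOM(\ell,\cL)$ yields
\[
\chi(j_{!*}\HOM(\ell,\cL))=2\rk(\cL)-\sum_{x\in\p1-U}\delta(\HOM(\cV_x,\Psi_x(\cL)))\ge\frac{2}{\rk(\cL)}>0.
\]

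Consequently at least one of $H^0_{dR}(\p1,j_{!*}\HOM(\ell,\cL))$ or $H^2_{dR}(\p1,j_{!*}\HOM(\ell,\cL))$ is nonzero. The first group equals $\Hom(\ell,\cL)$: a flat section of $\cL\otimes\ell^\vee$ on $U$ is the same as a morphism $\cO_U\to\HOM(\ell,\cL)$, and since $j_{!*}\cO_U=\cO_{\p1}$ it extends uniquely to a morphism $\cO_{\p1}\to j_{!*}\HOM(\ell,\cL)$, i.e., a global section of the middle extension. By Verdier duality the second group is dual to $H^0_{dR}(\p1,j_{!*}\HOM(\cL,\ell))=\Hom(\cL,\ell)$, so at least one of these Hom-spaces is nonzero, as required.

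The hard part is essentially only bookkeeping: the hypothesis $\rk(\cV_x)=1$ is precisely what converts the semi-additivity estimate of Corollary~\ref{co:maxslope} into a linear inequality that cancels cleanly against the rigidity identity coming from $\rig(\cL)=2$. The one substantive technical point to double-check is the identification of $H^i_{dR}(\p1,j_{!*}(\cdot))$ with the appropriate Hom-spaces, which follows in a standard way from $j_{!*}\cO_U=\cO_{\p1}$ together with Verdier self-duality for middle extensions.
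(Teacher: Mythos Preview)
Your proof is correct and follows essentially the same route as the paper: apply Proposition~\ref{pp:EulerPoincare} to $\HOM(\ell,\cL)$, bound each local term via Corollary~\ref{co:maxslope} with $\rk(\cV_x)=1$, and compare with the rigidity identity to get $\chi(j_{!*}\HOM(\ell,\cL))\ge 2/\rk(\cL)>0$. The paper's version is terser---it simply asserts that positivity of $\chi$ suffices---whereas you spell out the identification of $H^0$ and $H^2$ with $\Hom(\ell,\cL)$ and $\Hom(\cL,\ell)^\vee$ via Verdier duality; this is a welcome clarification but not a different argument.
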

\begin{proof}
It suffices to show that $\chi_{dR}(j_{!*}(\HOM(\ell,\cL)))>0$. Indeed, by Proposition~\ref{pp:EulerPoincare}, we have
\begin{equation*}
\begin{split}
\chi_{dR}(j_{!*}(\HOM(\ell,\cL)))=&2\rk(\cL)-\sum\delta(\HOM(\cV_x,\Psi_x(\cL)))\\ \ge&
2\rk(\cL)-\frac{1}{\rk(\cL)}\sum\delta(\END(\Psi_x(\cL)))\\=&\frac{\rig(\cL)}{\rk{\cL}}=\frac{2}{\rk(\cL)}>0.
\end{split}
\end{equation*}
\end{proof}

\begin{proposition}[Theorem~\ref{th:maintheorem}, Case Ib] Suppose there is $\lambda\in\kk-\Z$ and a rank one connection $\ell$ on $U$ satisfying \eqref{eq:twistedell}.
Then
$\rk(\HOM(\ell,\cL)\star_{mid}\cK^\lambda)<\rk(\cL)$.
\label{pp:Case1b}
\end{proposition}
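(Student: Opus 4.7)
The plan is to apply Proposition~\ref{pp:rankradon} to $\cM := \HOM(\ell,\cL)$ and then bound the right-hand side using the rigidity of $\cL$ together with Corollary~\ref{co:maxslope}. First I would translate the hypothesis \eqref{eq:twistedell} into formulas for the local formal types of $\cM$: since $\ell$ has rank one, $\Psi_x(\cM)=\HOM(\Psi_x(\ell),\Psi_x(\cL))$, so for $x\in\A1-U$ this is $\HOM(\cV_x,\Psi_x(\cL))$, while for $x=\infty$ we have $\Psi_\infty(\cM)\otimes\cK^{-\lambda}_\infty=\HOM(\cV_\infty,\Psi_\infty(\cL))$ (the twist by $\cK^{-\lambda}_\infty$ on $\Psi_\infty(\ell)$ in \eqref{eq:twistedell} is precisely what cancels the $\cK^{-\lambda}_\infty$ appearing in Proposition~\ref{pp:rankradon}).

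Next, since $\rk(\cM)=\rk(\cL)$, Proposition~\ref{pp:rankradon} gives
\begin{equation*}
\rk(\cM\star_{mid}\cK^\lambda)=\sum_{x\in\p1-U}\delta(\HOM(\cV_x,\Psi_x(\cL)))-\rk(\cL).
\end{equation*}
Because we are in Case I, $\rk(\cV_x)=1$ for every $x$, so Corollary~\ref{co:maxslope} (for the local formal type $\Psi_x(\cL)$) specializes to
\begin{equation*}
\delta(\HOM(\cV_x,\Psi_x(\cL)))\le\frac{1}{\rk(\cL)}\delta(\END(\Psi_x(\cL))).
\end{equation*}

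Finally, I would apply Proposition~\ref{pp:EulerPoincare} to $\END(\cL)$, whose generic rank is $\rk(\cL)^2$, and use rigidity: by Section~\ref{sc:rigidity}, $\rig(\cL)=\chi(j_{!*}\END(\cL))=2$, whence
\begin{equation*}
\sum_{x\in\p1-U}\delta(\END(\Psi_x(\cL)))=2\rk(\cL)^2-2.
\end{equation*}
Putting these estimates together yields
\begin{equation*}
\rk(\cM\star_{mid}\cK^\lambda)\le\frac{2\rk(\cL)^2-2}{\rk(\cL)}-\rk(\cL)=\rk(\cL)-\frac{2}{\rk(\cL)}<\rk(\cL),
\end{equation*}
which is the desired inequality. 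There is no real obstacle here beyond keeping track of the twist by $\cK^{-\lambda}_\infty$ and checking the case $\rk(\cV_x)=1$ of Corollary~\ref{co:maxslope}; the condition $\lambda\notin\Z$ is used not in the estimate itself but in Case Ib to guarantee the existence of the rank one $\ell$ satisfying \eqref{eq:twistedell} and to ensure $\cK^\lambda$ is a genuine (nontrivial) Kummer system so that Proposition~\ref{pp:rankradon} applies.
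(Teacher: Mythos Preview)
Your proof is correct and follows essentially the same route as the paper's: apply Proposition~\ref{pp:rankradon} to $\HOM(\ell,\cL)$, use the choice of $\cV_x$ from Corollary~\ref{co:maxslope} (with $\rk(\cV_x)=1$ in Case~I) to bound each local term by $\frac{1}{\rk(\cL)}\delta(\END(\Psi_x(\cL)))$, and finish with the Euler--Poincar\'e formula and $\rig(\cL)=2$. Your version is a bit more explicit about how the twist in~\eqref{eq:twistedell} makes the $\infty$-term come out to $\delta(\HOM(\cV_\infty,\Psi_\infty(\cL)))$, but the argument is the same.
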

\begin{proof}
By Proposition~\ref{pp:rankradon}, we have
\begin{equation*}
\begin{split}
\rk(\HOM(\ell,\cL)\star_{mid}\cK^\lambda)=&\sum_{x\in\p1-U}\delta(\HOM(\cV_x,\Psi_x(\cL)))-\rk(\cL)\\ \le&
\frac{1}{\rk(\cL)}\sum\delta(\END(\Psi_x(\cL)))-\rk(\cL)\\=&
\rk(\cL)-\frac{\rig(\cL)}{\rk(\cL)}=\rk(\cL)-\frac{2}{\rk(\cL)}<\rk(\cL).
\end{split}
\end{equation*}
\end{proof}

\subsection{Details of the proof: Case II}

Again, we start with some local results at fixed $x\in\p1$.

\begin{lemma}\label{lm:irrslope} Suppose $\cV,\cW\in\DHol{K_x}$ are irreducible.
\begin{enumerate} 
\item \label{it:irrslope1} If $\slope(\cV)\ne\slope(\cW)$, then 
\begin{align*}
\slope(\HOM(\cV,\cW))&=\max(\slope(\cV),\slope(\cW)),\\
\frac{\delta(\HOM(\cV,\cW))}{\rk(\cV)\rk(\cW)}&=1+\max(\slope(\cV),\slope(\cW)).
\end{align*}

\item \label{it:irrslope2} If $\slope(\cV)=\slope(\cW)$ has denominator $d$, then
\begin{align*}
\slope(\HOM(\cV,\cW))&\ge\left(1-\frac{1}{d}\right)\slope(\cW),\\
\frac{\delta(\HOM(\cV,\cW))}{\rk(\cV)\rk(\cW)}&\ge
1-\frac{1}{d^2}+\left(1-\frac{1}{d}\right)\slope(\cW).
\end{align*}
\end{enumerate} 
\end{lemma}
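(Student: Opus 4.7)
I shall invoke the Levelt-Turrittin structure theorem (see e.g.\ \cite[Theorem III.1.2]{Mal}) to trivialize both $\cV$ and $\cW$ on a common ramified cover, then read off $\irreg$ and the horizontal rank of $\HOM(\cV,\cW)$ directly. Specifically, pick a uniformizer $\zeta$ at $x$ and set $L=\kk((t))$ with $t^r=\zeta$, where $r$ is divisible by both $r_1=\rk(\cV)$ and $r_2=\rk(\cW)$. Letting $\pi:\spec L\to\spec K_x$, I obtain $\pi^*\cV\simeq\bigoplus_i\cL_{\mu_i}$ and $\pi^*\cW\simeq\bigoplus_j\cL_{\nu_j}$ with $\cL_\alpha\eqd(L,d+\alpha\,dt)$; irreducibility forces the $\mu_i$ (resp.\ $\nu_j$) to form a free $\Gal(L/K_x)$-orbit whose leading $t$-order is uniformly $-rs_1-1$ (resp.\ $-rs_2-1$). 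Using the transport formula $\irreg_L(\pi^*\cN)=r\cdot\irreg_{K_x}(\cN)$, I reduce everything to rank-one computations on $L$.

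For part \eqref{it:irrslope1}, $\pi^*\HOM(\cV,\cW)=\bigoplus_{i,j}\cL_{\nu_j-\mu_i}$, and since the leading terms of $\mu_i$ and $\nu_j$ live in different $t$-orders, no cancellation is possible: every $\nu_j-\mu_i$ has $L$-slope $r\max(s_1,s_2)$. Summing and dividing by $r$ yields $\irreg(\HOM(\cV,\cW))=r_1r_2\max(s_1,s_2)$. Distinct slopes force $\cV\not\simeq\cW$, so Schur's lemma gives $\Hom(\cV,\cW)=0$, and $\HOM(\cV,\cW)$ has no horizontal part; both formulas follow.

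For part \eqref{it:irrslope2}, irreducibility with $\slope=m/d$ in lowest terms forces $r_1=r_2=d$, so I take $r=d$. The Galois group $\Z/d$ acts diagonally on the $d^2$ pairs $(i,j)$, producing a decomposition $\HOM(\cV,\cW)=\bigoplus_{k\in\Z/d}\cV_k$ into $d$ rank-$d$ components indexed by $k=j-i\bmod d$. At most one component $\cV_{k_0}$ exhibits leading-term cancellation in $\nu_j-\mu_i$ (this occurs precisely when the ratio of the leading coefficients of $\nu$ and $\mu$ is a $d$-th root of unity, in which case exactly one $k$ works). The remaining $d-1$ components each have all rank-one factors of $L$-slope $ds$, contributing $K$-irregularity $ds$ apiece. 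Summing gives $\irreg(\HOM(\cV,\cW))\geq(d-1)ds$, whence $\slope(\HOM(\cV,\cW))\geq(1-1/d)s$.

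The delicate point, where I expect the main difficulty, is the $\delta$-inequality. A naive bound $\rk(\HOM(\cV,\cW)^{hor})\leq\rk(\cV_{k_0})=d$ would give only $\delta/(r_1r_2)\geq(1-1/d)(s+1)$, short of the claim by exactly $1/d-1/d^2$. The fix is that $\cV,\cW$ are absolutely irreducible over the algebraically closed $\kk$, so Schur's lemma yields $\rk(\HOM(\cV,\cW)^{hor})=\dim\Hom(\cV,\cW)\leq 1$; combined with $\irreg\geq(d-1)ds$ and $\rk=d^2$, this produces $\delta\geq(d-1)ds+d^2-1$, and division by $d^2$ gives the stated bound $(1-1/d)s+1-1/d^2$.
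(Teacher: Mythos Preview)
Your overall approach mirrors the paper's proof exactly: pull back to a common ramified cover via Levelt--Turrittin, compute $\irreg(\HOM(\cV,\cW))$ as a sum of orders of $\nu_j-\mu_i$, and use Schur to control the horizontal part. Part~\eqref{it:irrslope1} is fine.

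There is, however, a genuine gap in part~\eqref{it:irrslope2}. Your assertion that ``irreducibility with $\slope=m/d$ in lowest terms forces $r_1=r_2=d$'' is false. Irreducibility only gives $d\mid r_1$ and $d\mid r_2$. For instance, over $K=\kk((\zeta))$ take $L=\kk((t))$ with $t^4=\zeta$ and the rank-one connection $(L,\,d+(t^{-7}+t^{-6})\,dt)$; its Galois orbit has size $4$, so the induced connection over $K$ is irreducible of rank $4$, yet its slope is $6/4=3/2$ with denominator $d=2$. Your decomposition into $d$ components indexed by $j-i\bmod d$ and the ``at most one $k_0$'' cancellation claim therefore do not apply in general.

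The fix is minor and is exactly what the paper does: work with arbitrary $r_1,r_2$ divisible by $d$. The Galois action on the leading terms of the $\mu_\cV^{(i)}$ groups them into $r_1/d$ many $d$-tuples of the form $\{\zeta a\,z^{-1-s}dz:\zeta^d=1\}$, and similarly for $\cW$. Hence for each fixed $j$ at most $r_1/d$ of the differences $\nu_j-\mu_i$ can have leading-term cancellation, so at most a $1/d$ fraction of all $r_1r_2$ pairs cancel, giving $\irreg(\HOM(\cV,\cW))\ge r_1r_2(1-1/d)s$ and the slope bound. For the $\delta$-inequality your Schur argument $\dim\Hom(\cV,\cW)\le1$ is correct and yields
\[
\frac{\delta(\HOM(\cV,\cW))}{r_1r_2}\ge\Bigl(1-\frac{1}{d}\Bigr)s+1-\frac{1}{r_1r_2},
\]
and now one uses $r_1,r_2\ge d$ (rather than equality) to conclude $1/(r_1r_2)\le1/d^2$.
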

\begin{proof}
We use classification of connections on formal disk (see for instance \cite[Theorem III.1.2]{Mal}). 
There exists a ramified extension $$\tK_x=\kk((z^{1/r}))\supset K_x=\kk((z))$$ and 
isomorphisms
\begin{align*}
\cV\otimes\tK_x&\simeq(\tK_x^{\rk\cV},d+\diag(\mu_\cV^{(1)},\dots,\mu_\cV^{(\rk\cV)})),&
\mu_\cV^{(i)}&\in\Omega_{\tK_x}=\kk((z^{1/r}))dz;\\
\cW\otimes\tK_x&\simeq(\tK_x^{\rk\cW},d+\diag(\mu_\cW^{(1)},\dots,\mu_\cW^{(\rk\cW)})),&
\mu_\cW^{(j)}&\in\Omega_{\tK_x}.
\end{align*}
Let us denote by $\ord(\mu)$ the order of $\mu\in\Omega_\tK$ in $z$ (which might be fractional). Then 
\begin{align*}
\ord(\mu_\cV^{(i)})&=-1-\slope(\cV) &(i&=1,\dots,\rk(\cV)),\\
\ord(\mu_\cW^{(j)})&=-1-\slope(\cW) &(j&=1,\dots,\rk(\cW)).
\end{align*}
Then
\begin{equation}
\irreg(\HOM(\cV,\cW))=\sum_{i,j}\max(-1-\ord(\mu_\cW^{(j)}-\mu_\cV^{(i)}),0)
\label{eq:tensor}
\end{equation}

Proof of \eqref{it:irrslope1}. The leading terms of $\mu_\cW^{(j)}-\mu_\cV^{(i)}$ do not cancel, so 
$$\ord(\mu_\cW^{(j)}-\mu_\cV^{(i)})=\min(\ord(\mu_\cW^{(j)}),\ord(\mu_\cV^{(i)})).$$
Now \eqref{eq:tensor} implies the formula for $\slope(\HOM(\cV,\cW))$. To prove the formula for $\delta(\HOM(\cV,\cW))$, we notice
that $H^0(K_x,\HOM(\cV,\cW))=\Hom(\cV,\cW)=0$, so $$\delta(\HOM(\cV,\cW))=\irreg(\HOM(\cV,\cW))+\rk(\HOM(\cV,\cW)).$$

Proof of \eqref{it:irrslope2}. Now cancellation in the leading terms of $\mu_\cW^{(j)}-\mu_\cV^{(i)}$ is possible. However,
$\cV\otimes\tK_x$ carries an action of the Galois group $\Gal(\tK_x/K_x)$. In particular, leading terms of $\mu_\cV^{(j)}$ come in $d$-tuples of the form
$$\{\zeta a z^{-1-\slope(\cV)}dz:\zeta\in\kk,\zeta^d=1\}$$
for some fixed $a\in\kk-\{0\}$. Therefore, among the differences $\mu_\cW^{(j)}-\mu_\cV^{(i)}$,
not more than one out of every $d$ has cancellation.  Now \eqref{eq:tensor} implies the formula for $\slope(\HOM(\cV,\cW))$. Finally, 
$\dim\Hom(\cV,\cW)\le 1$, and so
\begin{equation*}
\begin{split}
\frac{\delta(\HOM(\cV,\cW))}{\rk(\cV)\rk(\cW)}&\ge\frac{\rk(\cV)\rk(\cW)-1}{\rk(\cV)\rk(\cW)}+
\slope(\HOM(\cV,\cW))\\
&\ge 1-\frac{1}{\rk(\cV)\rk(\cW)}+\left(1-\frac{1}{d}\right)\slope(\cW).
\end{split}
\end{equation*}
It remains to notice that $\rk(\cV),\rk(\cW)\ge d$.
\end{proof}
\begin{lemma}\label{lm:slope} Suppose $\cV,\cW\in\DHol{K_x}$, and $\cV$ is irreducible.
\begin{enumerate}
\item\label{it:slope1} If $\rk(\cV)>1$, then $\delta(\HOM(\cV,\cW))\ge\rk(\cV)\rk(\cW)$.
\item\label{it:slope2} If $\slope(\cV)>2$ is not an integer, then $\delta(\HOM(\cV,\cW))\ge 2\rk(\cV)\rk(\cW)$.
\end{enumerate}
\end{lemma}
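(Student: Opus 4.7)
The plan is to reduce to the case where $\cW$ is irreducible by semi-additivity of $\delta$, then apply Lemma~\ref{lm:irrslope} after splitting on whether $\slope(\cV)=\slope(\cW)$. If $\cW_1,\dots,\cW_k$ are the irreducible subquotients of $\cW$ counted with multiplicity, then semi-additivity gives
$$\delta(\HOM(\cV,\cW))\ge\sum_i\delta(\HOM(\cV,\cW_i)),$$
and $\sum_i\rk(\cW_i)=\rk(\cW)$, so both parts reduce to the case when $\cW$ is irreducible.

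The structural fact I would use throughout is that an irreducible $\cV\in\DHol{K_x}$ of rank $r>1$ has $\slope(\cV)>0$ (since irreducible regular objects of $\DHol{K_x}$ are rank one) and, by the Levelt-Turrittin decomposition, the denominator $d$ of $\slope(\cV)$ in lowest terms divides $r$ and satisfies $d\ge 2$; equivalently, an irreducible connection with integer slope is rank one. This forces rank-one $\cW$ to have integer slope, so in the equal-slope sub-cases below the irreducible $\cW$ automatically has rank $>1$ as well.

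For part~\eqref{it:slope1}, assume $\cW$ irreducible. If $\slope(\cV)\ne\slope(\cW)$, Lemma~\ref{lm:irrslope}\eqref{it:irrslope1} immediately gives
$$\frac{\delta(\HOM(\cV,\cW))}{\rk(\cV)\rk(\cW)}=1+\max(\slope(\cV),\slope(\cW))\ge 1.$$
If $\slope(\cV)=\slope(\cW)=s$, then by the structural fact $s$ is a positive non-integer with denominator $d\ge 2$, hence $s\ge 1/d$, and Lemma~\ref{lm:irrslope}\eqref{it:irrslope2} yields
$$\frac{\delta(\HOM(\cV,\cW))}{\rk(\cV)\rk(\cW)}\ge 1-\frac{1}{d^2}+\left(1-\frac{1}{d}\right)\cdot\frac{1}{d}=1+\frac{d-2}{d^2}\ge 1.$$

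For part~\eqref{it:slope2}, $\slope(\cV)=s>2$ is non-integer, so its denominator $d$ is at least $2$. The unequal-slope case gives $\delta(\HOM(\cV,\cW))/(\rk(\cV)\rk(\cW))\ge 1+s>3$. In the equal-slope case, writing $s=p/d$ in lowest terms with $p>2d$ forces $p\ge 2d+1$, so $s\ge 2+1/d$, and Lemma~\ref{lm:irrslope}\eqref{it:irrslope2} gives
$$\frac{\delta(\HOM(\cV,\cW))}{\rk(\cV)\rk(\cW)}\ge 1-\frac{1}{d^2}+\left(1-\frac{1}{d}\right)\left(2+\frac{1}{d}\right)=3-\frac{1}{d}-\frac{2}{d^2}\ge 2,$$
the last inequality being equivalent to $d(d-1)\ge 2$, which holds for $d\ge 2$ with equality at $d=2$. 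I expect the main work to be concentrated in these equal-slope sub-cases, where Lemma~\ref{lm:irrslope}\eqref{it:irrslope2} is essentially tight---the denominator-rank correspondence for irreducible formal connections is precisely what makes both of the arithmetic bounds come out in our favor at the boundary value $d=2$.
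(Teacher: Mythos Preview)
Your argument for part~\eqref{it:slope2} is correct and essentially identical to the paper's. The gap is in part~\eqref{it:slope1}: the ``structural fact'' that an irreducible $\cV$ of rank $>1$ must have non-integer slope is false. The denominator of $\slope(\cV)$ divides $\rk(\cV)$, but it need not be $>1$. For a concrete counterexample, let $t=z^{1/2}$ and let $\cV$ be the pushforward to $\kk((z))$ of the rank-one connection $(\kk((t)),\,d+t^{-5}dt+t^{-4}dt)$. Its Galois conjugate differs by $2t^{-4}dt$, so $\cV$ is irreducible of rank $2$; but the leading term $t^{-5}dt=\tfrac{1}{2}z^{-3}dz$ gives $\slope(\cV)=2$, an integer. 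In that situation your equal-slope estimate collapses to $1-\tfrac{1}{d^2}+(1-\tfrac{1}{d})s$ with $d=1$, which is $0$ and proves nothing.

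The paper repairs exactly this point: before invoking Lemma~\ref{lm:irrslope}, it replaces $\cV,\cW$ by $\HOM(\ell,\cV),\HOM(\ell,\cW)$ for a rank-one $\ell$ chosen (as in Remark~\ref{rm:approximation}) so that $\slope(\HOM(\ell,\cV))$ is not an integer. This twist leaves $\HOM(\cV,\cW)$ and hence $\delta$ unchanged, preserves irreducibility of $\cV$, and forces $d\ge 2$; after that your arithmetic (which is the same as the paper's) goes through. So the missing ingredient is precisely this preliminary twist.
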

\begin{proof}
By semiadditivity of $\delta$, we may assume that $\cW$ is irreducible without losing generality.

\eqref{it:slope1} Without loss of generality, we may assume that $\slope(\cV)$ is not an integer. Indeed, we can replace $\cV$ and $\cW$ with $\HOM(\ell,\cV)$ and $\HOM(\ell,\cW)$
for any $\ell\in\DHol{K}$ of rank one, and we can choose $\ell$ so that $\slope(\HOM(\ell,\cV))$ is not an integer (as in Remark~\ref{rm:approximation}). 
Now the statement follows from Lemma~\ref{lm:irrslope}\eqref{it:irrslope1} if $\slope(\cV)\ne\slope(\cW)$ or from Lemma~\ref{lm:irrslope}\eqref{it:irrslope2} if $\slope(\cV)=\slope(\cW)$.

\eqref{it:slope2} If $\slope(\cV)\ne\slope(\cW)$, the statement follows from Lemma~\ref{lm:irrslope}\eqref{it:irrslope1}. Assume $\slope(\cV)=\slope(\cW)$. Then 
$\slope(\cW)\ge 2+\frac{1}{d}$, where $d$ is the denominator of $\slope(\cW)$. Lemma~\ref{lm:irrslope}\eqref{it:irrslope2} implies
$$\frac{\delta(\HOM(\cV,\cW))}{\rk(\cV)\rk(\cW)}\ge 1-\frac{1}{d^2}+\left(1-\frac{1}{d}\right)\cdot\left(2+\frac{1}{d}\right)
=2+\frac{d^2-d-2}{d^2}\ge2.$$
\end{proof}

\begin{lemma}\label{lm:slope2} Suppose $\cV\in\DHol{K_x}$ is irreducible and $\slope(\cV)<2$ is not
an integer. Then for any $\cW\in\DHol{K_x}$, 
$$\delta(\HOM(\cV,\cW))
\ge(\irreg(\cW^{>1})-\rk(\cW^{>1}))\rk(\cV)+\rk(\cV)\rk(\cW).$$
(Recall that $\cW^{>1}$ is the maximal submodule of $\cW$ whose components all have slopes greater than one.)
\end{lemma}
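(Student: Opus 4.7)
The plan is to reduce to the case of irreducible $\cW$ by means of a composition series and iterated semi-additivity of $\delta$, then apply Lemma~\ref{lm:irrslope} on each irreducible subquotient, splitting cases according to whether its slope agrees with $\slope(\cV)$.

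Fix a composition series $0 = \cW_0 \subset \cW_1 \subset \cdots \subset \cW_n = \cW$ with irreducible subquotients $\cJ_j := \cW_j/\cW_{j-1}$. Since $\HOM(\cV,-)$ is exact, iterating the semi-additivity lemma yields
$$\delta(\HOM(\cV, \cW)) \ge \sum_{j=1}^n \delta(\HOM(\cV, \cJ_j)).$$
On the other side, using $\irreg(\cW^{>1}) = \sum_{\slope(\cJ_j)>1}\slope(\cJ_j)\rk(\cJ_j)$, $\rk(\cW^{>1}) = \sum_{\slope(\cJ_j)>1}\rk(\cJ_j)$, and $\rk(\cW) = \rk(\cW^{\le 1}) + \rk(\cW^{>1})$, the right-hand side of the desired inequality rewrites as
$$\rk(\cV)\sum_{\slope(\cJ_j) \le 1} \rk(\cJ_j) \;+\; \rk(\cV)\sum_{\slope(\cJ_j) > 1}\slope(\cJ_j)\rk(\cJ_j).$$
Hence it suffices to prove, for every irreducible $\cJ \in \DHol{K_x}$, the local bounds $\delta(\HOM(\cV,\cJ)) \ge \rk(\cV)\rk(\cJ)$ when $\slope(\cJ)\le 1$ and $\delta(\HOM(\cV,\cJ)) \ge \rk(\cV)\rk(\cJ)\slope(\cJ)$ when $\slope(\cJ)>1$.

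For each of these local bounds, split on whether $\slope(\cV)=\slope(\cJ)$. When the slopes differ, Lemma~\ref{lm:irrslope}\eqref{it:irrslope1} gives $\delta(\HOM(\cV,\cJ))/(\rk(\cV)\rk(\cJ)) = 1+\max(\slope(\cV),\slope(\cJ))$, which is $\ge 1$ (settling the $\slope(\cJ)\le 1$ bound) and $\ge 1+\slope(\cJ)>\slope(\cJ)$ (settling the $\slope(\cJ)>1$ bound). When $\slope(\cV)=\slope(\cJ)=s$, the non-integrality of $\slope(\cV)$ forces the ramification denominator $d$ to be $\ge 2$, and Lemma~\ref{lm:irrslope}\eqref{it:irrslope2} gives the estimate $1-1/d^2+(1-1/d)s$. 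For $s\le 1$ this must exceed $1$: since $s\ge 1/d$, one has $(1-1/d)s \ge (d-1)/d^2 \ge 1/d^2$. For $s>1$ the requirement reduces to $s \le d - 1/d$.

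The main obstacle is precisely this last inequality $s\le d-1/d$. For $d\ge 3$ we have $d-1/d \ge 8/3 > 2 > s$, so the bound is comfortable. The critical case is $d=2$, where the only element of $(1,2)$ with denominator $2$ is $s = 3/2 = d-1/d$, and the estimate holds with equality. This is exactly the place where the hypothesis $\slope(\cV) < 2$ is sharp; for $\slope(\cV)>2$ one would instead appeal to Lemma~\ref{lm:slope}\eqref{it:slope2}.
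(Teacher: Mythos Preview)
Your proof is correct and follows essentially the same approach as the paper: reduce to irreducible $\cW$ by semi-additivity, then handle the cases $\slope(\cW)\le 1$ and $\slope(\cW)>1$ via Lemma~\ref{lm:irrslope}, splitting on whether the slopes agree. The only cosmetic differences are that the paper invokes Lemma~\ref{lm:slope}\eqref{it:slope1} directly for the $\slope(\cW)\le 1$ case (rather than rederiving it), and in the equal-slope $s>1$ case the paper bounds $s\le 2-\frac{1}{d}$ uniformly to obtain $1-\frac{2}{d}\ge 0$ instead of splitting $d=2$ versus $d\ge 3$.
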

\begin{proof}
By semiadditivity of $\delta$, we can assume that $\cW$ is irreducible (the right-hand side is additive in $\cW$). The statement follows from Lemma~\ref{lm:slope}\eqref{it:slope1} if
$\slope(\cW)\le 1$, so assume $\slope(\cW)>1$. Then $\cW=\cW^{>1}$, and we have to show that
$$\frac{\delta(\HOM(\cV,\cW))}{\rk(\cV)\rk(\cW)}\ge\slope(\cW).$$ If $\slope(\cV)\ne\slope(\cW)$, this follows from Lemma~\ref{lm:irrslope}\eqref{it:irrslope1}. Suppose therefore that $\slope(\cV)=\slope(\cW)$. 
Then by Lemma~\ref{lm:irrslope}\eqref{it:irrslope2}, we have
$$\frac{\delta(\HOM(\cV,\cW))}{\rk(\cV)\rk(\cW)}-\slope(\cW)\ge 1-\frac{1}{d^2}-\frac{\slope(\cW)}{d}\ge
1-\frac{1}{d^2}-\frac{1}{d}\left(2-\frac{1}{d}\right)=1-\frac{2}{d}\ge 0.$$
Here we used that $\slope(\cV)=\slope(\cW)\le 2-\frac{1}{d}$.
\end{proof}

Let us return to the second case of Theorem~\ref{th:maintheorem}. We need to verify the following two claims.

\begin{proposition} There is at most one $x\in\p1-U$ such that $\rk(\cV_x)>1$.
\label{pp:therecanbeonlyone}
\end{proposition}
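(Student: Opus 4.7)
The plan is to compare two lower bounds for $\sum_{x\in\p1-U}\delta(\END(\Psi_x(\cL)))$: the equality coming from rigidity via the Euler--Poincar\'e formula, and the lower bound coming from Corollary~\ref{co:maxslope} combined with Lemma~\ref{lm:slope}\eqref{it:slope1}. A count on degrees will show that having two singularities with $\rk(\cV_x)>1$ is incompatible with $\rig(\cL)=2$.

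First, since $\cL$ is rigid and irreducible, we have $\rig(\cL)=\chi(j_{!*}\END(\cL))=2$. Applying Proposition~\ref{pp:EulerPoincare} to $\END(\cL)$, which has generic rank $\rk(\cL)^2$, this becomes
\begin{equation*}
\sum_{x\in\p1-U}\delta(\END(\Psi_x(\cL)))=2\rk(\cL)^2-2.
\end{equation*}
On the other hand, Corollary~\ref{co:maxslope} (applied at each $x$ to $\cV=\Psi_x(\cL)$ with the chosen $\cV_x$) gives
\begin{equation*}
\delta(\END(\Psi_x(\cL)))\ge\frac{\rk(\cL)}{\rk(\cV_x)}\delta(\HOM(\cV_x,\Psi_x(\cL))).
\end{equation*}

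Now suppose for contradiction that there exist two distinct points $x_1,x_2\in\p1-U$ with $\rk(\cV_{x_i})>1$ for $i=1,2$. Lemma~\ref{lm:slope}\eqref{it:slope1}, applied to the irreducible $\cV_{x_i}\in\DHol{K_{x_i}}$ and $\cW=\Psi_{x_i}(\cL)$, yields $\delta(\HOM(\cV_{x_i},\Psi_{x_i}(\cL)))\ge\rk(\cV_{x_i})\rk(\cL)$, and combining with the previous inequality gives $\delta(\END(\Psi_{x_i}(\cL)))\ge\rk(\cL)^2$ for $i=1,2$. Since $\delta\ge 0$ for every formal type (as $\irreg\ge 0$ and $\rk(\cV^{hor})\le\rk(\cV)$), summing over all singular points yields
\begin{equation*}
\sum_{x\in\p1-U}\delta(\END(\Psi_x(\cL)))\ge 2\rk(\cL)^2,
\end{equation*}
which contradicts the rigidity identity above.

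The argument is essentially a one-line inequality chase, so I do not anticipate a serious obstacle; the only point requiring care is to justify that $\delta(\END(\Psi_x(\cL)))\ge 0$ for the remaining points $x\ne x_1,x_2$, which follows directly from the definition~\eqref{eq:delta}.
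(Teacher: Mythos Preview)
Your proof is correct and follows essentially the same route as the paper: use Lemma~\ref{lm:slope}\eqref{it:slope1} together with the defining inequality of $\cV_x$ from Corollary~\ref{co:maxslope} to get $\delta(\END(\Psi_x(\cL)))\ge\rk(\cL)^2$ at any point with $\rk(\cV_x)>1$, and contrast this with the rigidity identity $\sum_x\delta(\END(\Psi_x(\cL)))=2\rk(\cL)^2-\rig(\cL)<2\rk(\cL)^2$. The only cosmetic difference is that you spell out $\rig(\cL)=2$ and the nonnegativity of $\delta$ at the remaining points explicitly, whereas the paper leaves these implicit.
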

\begin{proof}
Indeed, by Lemma~\ref{lm:slope}\eqref{it:slope1}, whenever $\rk(\cV_x)>1$, we have
$$\delta(\END(\Psi_x(\cL)))\ge\frac{\rk(\cL)}{\rk(\cV_x)}\delta(\HOM(\cV_x,\Psi_x(\cL)))
\ge\rk(\cL)^2.$$
But by rigidity,
$$\sum_x\delta(\END(\Psi_x(\cL)))=2\rk(\cL)^2-\rig(\cL)<2\rk(\cL)^2.$$
\end{proof}

\begin{proposition}[Theorem~\ref{th:maintheorem}, Case II] \label{pp:Case2}
Suppose for $\infty\in\p1-U$, $\rk(\cV_\infty)>1$. Choose a rank one
connection $\ell$ on $U$ such that $\cV_x\simeq\Psi_x(\ell)$ for $x\in\A1-U$, and
the slope of $\HOM(\Psi_\infty(\ell),\cV_\infty)$ is not an integer. Then
$\rk(\ft{\HOM(\ell,\cL)})<\rk(\cL)$.
\end{proposition}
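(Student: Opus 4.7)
The plan is to compute $\rk(\ft{\cM})$ for $\cM:=\HOM(\ell,\cL)$ using Proposition~\ref{pp:rankfourier}, bound each of its local summands in terms of $\delta(\END(\Psi_x(\cL)))/\rk(\cL)$, and close up via the rigidity identity $\sum_{x\in\p1-U}\delta(\END(\Psi_x(\cL)))=2\rk(\cL)^2-\rig(\cL)=2\rk(\cL)^2-2$, in parallel with the arguments of Propositions~\ref{pp:Case1a} and~\ref{pp:Case1b}.

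At a finite bad point $x\in\A1-U$ one has $\Psi_x(\ell)\simeq\cV_x$, and by Proposition~\ref{pp:therecanbeonlyone} only $\infty$ can satisfy $\rk(\cV_x)>1$, so $\rk(\cV_x)=1$; the defining property of $\cV_x$ then yields
$$\delta(\Psi_x(\cM))=\delta(\HOM(\cV_x,\Psi_x(\cL)))\le\frac{1}{\rk(\cL)}\delta(\END(\Psi_x(\cL))).$$
At $\infty$ I will set $\widetilde\cV:=\HOM(\Psi_\infty(\ell),\cV_\infty)$, which by hypothesis is irreducible of non-integer slope and of rank $\rk(\cV_\infty)$. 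The key input is the tensor identity
$$\HOM(\widetilde\cV,\Psi_\infty(\cM))\simeq\HOM(\cV_\infty,\Psi_\infty(\cL)),$$
obtained by cancelling the two copies of $\Psi_\infty(\ell)$. If $\slope(\widetilde\cV)>2$, then Lemma~\ref{lm:slope}\eqref{it:slope2} applied to $\cV=\widetilde\cV$, $\cW=\Psi_\infty(\cM)$ gives $\delta(\HOM(\cV_\infty,\Psi_\infty(\cL)))\ge 2\rk(\cV_\infty)\rk(\cL)$, whence $\delta(\END(\Psi_\infty(\cL)))\ge 2\rk(\cL)^2$, contradicting the rigidity sum (which forces each local $\delta(\END(\cdot))$ to be at most $2\rk(\cL)^2-2$). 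Thus $\slope(\widetilde\cV)<2$, and Lemma~\ref{lm:slope2} (with $\cV=\widetilde\cV$, $\cW=\Psi_\infty(\cM)$), combined once more with the defining property of $\cV_\infty$, produces
$$\irreg(\Psi_\infty(\cM)^{>1})-\rk(\Psi_\infty(\cM)^{>1})\le\frac{\delta(\END(\Psi_\infty(\cL)))}{\rk(\cL)}-\rk(\cL).$$

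Plugging all of these bounds into Proposition~\ref{pp:rankfourier} collapses the right-hand side into a single sum and yields
$$\rk(\ft{\cM})\le\frac{1}{\rk(\cL)}\sum_{x\in\p1-U}\delta(\END(\Psi_x(\cL)))-\rk(\cL)=\frac{2\rk(\cL)^2-2}{\rk(\cL)}-\rk(\cL)=\rk(\cL)-\frac{2}{\rk(\cL)},$$
which is strictly less than $\rk(\cL)$, as required. The main obstacle I anticipate is the infinity contribution: one has to recognize the tensor cancellation that turns the intrinsic invariant $\HOM(\cV_\infty,\Psi_\infty(\cL))$ into $\HOM(\widetilde\cV,\Psi_\infty(\cM))$, and then correctly dispatch both branches of the slope dichotomy so that either Lemma~\ref{lm:slope}\eqref{it:slope2} eliminates the high-slope case by a rigidity contradiction, or Lemma~\ref{lm:slope2} furnishes the needed inequality in the low-slope case.
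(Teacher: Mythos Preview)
Your proof is correct and follows essentially the same approach as the paper: you introduce the same twisted objects $\widetilde\cV=\HOM(\Psi_\infty(\ell),\cV_\infty)$ and $\cW=\Psi_\infty(\cM)$, use the rigidity bound at $\infty$ together with Lemma~\ref{lm:slope}\eqref{it:slope2} to rule out $\slope(\widetilde\cV)>2$, then apply Lemma~\ref{lm:slope2} to obtain inequality~\eqref{eq:needed}, and finish exactly as in Proposition~\ref{pp:Case1b}. The only difference is cosmetic---you run the slope dichotomy as a contradiction argument and spell out the final Euler--Poincar\'e computation explicitly, whereas the paper first bounds $\delta(\HOM(\cV,\cW))$ and then invokes the contrapositive of Lemma~\ref{lm:slope}\eqref{it:slope2}.
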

\begin{proof}
Indeed, by Proposition \ref{pp:rankfourier}, we have
\begin{multline*}
\rk(\ft{\HOM(\ell,\cL)})=\sum_{x\in\A1-U}\delta(\HOM(\cV_x,\Psi_x(\cL)))\\
+\irreg(\Psi_\infty(\HOM(\ell,\cL))^{>1})-\rk\Psi_\infty(\HOM(\ell,\cL))^{>1}.
\end{multline*}
It suffices to prove the inequality
\begin{multline}
\irreg(\Psi_\infty(\HOM(\ell,\cL))^{>1})-\rk\Psi_\infty(\HOM(\ell,\cL))^{>1}\\ \le
\frac{\delta(\HOM(\cV_\infty,\Psi_\infty(\cL)))}{\rk(\cV_\infty)}-\rk(\cL),
\label{eq:needed}
\end{multline}
and then use the argument of Proposition \ref{pp:Case1b}.

To prove \eqref{eq:needed}, take $$\cV=\HOM(\Psi_\infty(\ell),\cV_\infty),\qquad \cW=\HOM(\Psi_\infty(\ell),\Psi_\infty(\cL)).$$ 
By the argument used to prove Proposition~\ref{pp:therecanbeonlyone},
$\delta(\HOM(\cV,\cW))< 2\rk(\cV)\rk(\cW)$. We then see that $\slope(\cV)<2$ by 
Lemma~\ref{lm:slope}\eqref{it:slope2}. 
Finally, \eqref{eq:needed} follows from Lemma~\ref{lm:slope2}.
\end{proof}

\section{Applications}
\subsection{Irregular Deligne-Simpson problem} \label{sc:DS}
Irregular Katz's algorithm can be applied to the `irregular Deligne-Simpson problem' for rigid local systems. In the case of regular singularities, this is explained in
\cite[Section~6.4]{Ka}, and the irregular case is quite similar. 

\begin{definition} A \emph{formal type datum} is a collection of isomorphism classes
 $$\{[\cV_x]\}_{x\in\p1}$$ of connections $\cV_x\in\DHol{K_x}$ such that the following conditions hold:
 \begin{enumerate}
 \item $r=\rk\cV_x$ does not depend on $x$;
 \item For all but finitely many $x$, $\cV_x$ is trivial: $\cV_x\simeq(K_x^r,d)$;
 \item $\sum_x\res(\bigwedge^r\cV_x)\in\Z$. (Since $\bigwedge^r\cV_x\in\DHol{K_x}$ has 
rank one, its residue makes sense as an element of $\kk/\Z$.)
 \end{enumerate}

A solution of the  \emph{(irregular) Deligne-Simpson problem} corresponding to $\{[\cV_x]\}$ is
an irreducible connection $\cL$ on an open subset of $\p1$ with prescribed formal type: $\Psi_x(\cL)\simeq\cV_x$
for all $x$.

The \emph{rigidity index} of $\{[\cV_x]\}$ is
$$\rig\{[\cV_x]\}=2r^2-\sum_{x\in\p1}\delta(\END(\cV_x)),$$
where $\delta$ is defined by \eqref{eq:delta}. 
\end{definition}

Suppose that $\cL$ solves the Deligne-Simpson problem for $\{[\cV_x]\}$. By Proposition~\ref{pp:EulerPoincare}, $\rig(\cL)=\rig\{[\cV_x]\}$. In particular, $\rig\{[\cV_x]\}\le 2$. 

Let $\ft\cL$ be the Fourier transform of $\cL$, and let $\{[\ft\cV_x]\}$ be the formal type of $\ft\cL$: $\ft\cV_x=\Psi_x(\ft\cL)$. One can check that $\{[\ft\cV_x]\}$ is determined by $\{[\cV_x]\}$; essentially, $\ft\cV_x$ is given by the local Fourier transform of \cite{BE} (this is discussed in more details in \cite{LF}). In other words, we obtain a notion of the Fourier transform for formal type data, and $\{[\ft\cV_x]\}$ is the Fourier transform of
$\{[\cV_x]\}$.

For arbitrary formal type datum $\{[\cV_x]\}$, its Fourier transform $\{[\ft\cV_x]\}$ might be undefined. Actually, $[\ft\cV_x]$ (for $x\ne\infty$)
is constructed from $\{[\cV_x]\}$ in two steps: the local Fourier transform describes the
quotient $\ft\cV_x/(\ft\cV_x)^{hor}$ modulo the maximal trivial subconnection, while Proposition~\ref{pp:rankfourier} gives a formula for $\dim(\ft\cV_x)$. This determines the
isomorphism class $[\ft\cV_x]$, assuming the obvious compatibility condition
$\dim(\ft\cV_x/(\ft\cV_x)^{hor})\le\dim\ft\cV_x$. If the compatibility condition fails, the
Fourier transform of $\{[\cV_x]\}$ is undefined. 

The Fourier transform $\cL\to\ft\cL$ provides a one-to-one correspondence between solutions
to the Deligne-Simpson problems for $\{[\cV_x]\}$ and $\{[\ft\cV_x]\}$. If $\{[\ft\cV_x]\}$ is undefined, the Deligne-Simpson problem for $\{[\cV_x]\}$ has no solutions.

The situation for middle convolution $\cL\star_{mid}\cK^\lambda$ is similar to that for the Fourier transform. Again, it makes sense for formal type data, but it is not always defined.
If the formal type data are related by the middle convolution, their Deligne-Simpson problems
are equivalent. If the middle convolution of a formal type datum is undefined, its Deligne-Simpson problem has no solutions.

Now let us analyze the Deligne-Simpson problem for
a formal type datum $\{[\cV_x]\}$ in the case $\rig\{[\cV_x]\}=2$. We can run irregular
Katz's algorithm on the level of formal type data. On each step, we decrease the
rank of the formal type datum using either the middle convolution or the Fourier transform,
assuming that they are defined. After finitely many steps, we arrive at one of the two situations:

\begin{itemize}
\item Irregular Katz's algorithm decreases the rank of the formal type datum to one.
Then the Deligne-Simpson problem for $\{[\cV_x]\}$ is equivalent to the Deligne-Simpson problem for a formal type datum of rank one, which is clearly solvable.

\item The output of a step of irregular Katz's algorithm is undefined, and then
the Deligne-Simpson problem for $\{[\cV_x]\}$ has no solutions.
\end{itemize}
 
\begin{remark*}
In \cite{Si}, C.~Simpson uses Katz's algorithm to analyze the (regular)
Deligne-Simpson problem without restrictions on the rigidity index. We do not know whether
irregular Katz's algorithm can be used for similar analysis in the irregular case.
\end{remark*}

\subsection{Rigidity index zero and Lax pairs for Painlev\'e equations} \label{sc:rig0}

Irregular Katz's algorithm can be also used to classify connections $\cL$ of rigidity index $0$; the details will be given elsewhere. 
In the case of regular singularities, such classification was proved by Kostov 
\cite[Lemma~17]{Ko}. 

We claim that the proof of Theorem~\ref{th:maintheorem} can be modified for such $\cL$. It
is not true that the rank of $\cL$ can always be decreased to $1$, however, the algorithm's
\emph{stopping points} (that is, the connections whose rank cannot be decreased) can be 
described.

Let us study the moduli spaces of connections. For a fixed 
formal type datum $\{[\cV_x]\}$, consider the moduli space $\fM=\fM_{\{[\cV_x]\}}$
of irreducible connections of this formal type. Equivalently, points of $\fM$
are solutions to the Deligne-Simpson problem. Then $$\dim\fM=
-2-\rig(\{[\cV_x]\});$$ 
in particular if $\rig(\{\cV_x\})=0$, $\fM$ is a surface.

If $\{[\ft\cV_x]\}$ is the Fourier transform of $\{[\cV_x]\}$, we get an isomorphism
$$\fM_{\{[\cV_x]\}}\iso\fM_{\{[\ft\cV_x]\}}:\cL\mapsto\ft\cL.$$
Similarly, middle convolution $\cL\mapsto\cL\star_{mid}\cK^\lambda$ induces an isomorphism between moduli spaces. Therefore, the space $\fM$ does not change as we apply  
irregular Katz's algorithm to $\{[\cV_x]\}$. In this way, we can always reduce to the case
when the formal type $\cV_x$ is a stopping point of the algorithm.

Important examples are spaces $\fM$ for $\rk\cV_x=2$. Assume $\rig\{[\cV_x]\}=0$, so $\dim(\fM([\cV_x]))=2$. Note that $\{[\cV_x]\}$ is automatically a stopping point of
the algorithm, because its rank cannot be decreased to one, as all rank one systems are rigid.
The surface $\fM$ is the space of initial conditions
of a Painlev\'e equation $P_*$, where index $*=I,II,\dots,VI$ depends on $\{[\cV_x]\}$.
Geometrically, $P_*$ controls the isomonodromy deformation of connections. The isomorphisms induced by the Fourier transform and the middle convolution respect the isomonodromy deformations. Therefore, if generalized Katz's algorithm reduces formal type datum $\{[\cW_x]\}$ to $\{[\cV_x]\}$, the isomonodromy deformation of connections of type $\{[\cW_x]\}$
is also given by $P_*$. In other words, $\{[\cW_x]\}$ gives another Lax pair for $P_*$. In this manner, irregular Katz's algorithm in case of rigidity index zero can be viewed as a 
reduction algorithm for Lax pairs for Painlev\'e equations.

\begin{remark*} Only the sixth Painlev\'e equation $P_{VI}$ appears in the classification of
\cite[Lemma~17]{Ko}; the other Painlev\'e equations correspond to irregular formal types.
\end{remark*}

\section{Remarks}
\subsection{Middle convolution via twisted differential operators} \label{sc:mid}
The middle convolution with Kummer local system is naturally formulated in terms of rings of twisted differential operators (or TDOs). 

Denote by $\cD_1$ the TDO ring acting on $\cO_\p1(1)$ (see \cite{BB2} for the definition of
TDO ring). Let us `scale' $\cD_1$ by a fixed number $\lambda\in\kk$, denote the
resulting TDO by $\cD_\lambda$. Informally, $\cD_\lambda$ is the ring of differential operators
on $\cO_\p1(\lambda)$.

\begin{remark*} Consider the natural projection $p:\A2-\{0\}\to\p1$.
We can interpret holonomic $\cD_\lambda$-modules as $\cD$-modules $M$ on $\A2-\{0\}$ such
that the restriction of $M$ to any fiber $p^{-1}(x)$ is a sum of several copies of $\cK^{\lambda}$. Informally, we require that $M$ is a \emph{monodromic} $\cD$-module whose
restriction to each fiber has `monodromy $e^{2\pi i \lambda}$'. 
\end{remark*}

Suppose that $\lambda\in\kk-\Z$. In \cite{AE}, A.~D'Agnolo and M.~Eastwood present an
equivalence $\Rad$ (\emph{the Radon transform}) between the category
of $\D_\lambda$-modules and that of $\D_{-\lambda}$-modules. (One should keep in mind that up to equivalence,
the category of $\D_\lambda$-modules depends only on the image of $\lambda$ in $\kk/\Z$.)
$\Rad$ can be viewed as a twisted version of the transform defined by J.~-L.~Brylinski in \cite{Br}.
In a sense, it is also a particular case of the Radon transform defined by A.~Braverman and A.~Polishchuk, who consider monodromic sheaves whose monodromy need not be scalar 
(\cite{BP}).

Explicitly, $\Rad$ can be defined as the 
integral transform whose kernel is a rank one $\D_\lambda\boxtimes\D_\lambda$-module 
on $\p1\times\p1$ with a simple pole along the diagonal (and no other singularities). 
Alternatively, if one interprets $\D_\lambda$-modules as monodromic $\D$-modules on $\A2-\{0\}$, the equivalence is simply the Fourier transform on $\A2$.

We can view the middle convolution with Kummer local system as a composition of the Goresky-MacPherson extension and the Radon transform as follows.
A connection $\cL$ on an open set $U\subset\A1$ can be viewed as a $\D_\lambda|_U$-module
using the trivialization $\D_\lambda|_\A1=\D_\A1$. We then extend it to a 
$\D_\lambda$-module $j_{!*}\cL$ for $j:U\hookrightarrow\p1$. The Radon transform
$\Rad(j_{!*}\cL)$ is a holonomic $\D_{-\lambda}$-module that is smooth on $U$. Its restriction
to $U$ is a connection which equals $\cL\star_{mid}\cK^\lambda$.  

The first case of Theorem~\ref{th:maintheorem} can be reformulated:
\begin{enumerate}
\item[(\ref{cs:R}')] \sl{There is a rank one $\D_{-\lambda}|_U$-module $\ell$ such that
$$\rk\Rad(j_{!*}\HOM(\ell,\cL))<\rk(\cL).$$}
\end{enumerate}

Note that the point $\infty$ plays no special role in this formulation. Similarly, if one rewrites Proposition~\ref{pp:rankradon} using the Radon transform, 
the special treatment of $\infty$ is not necessary, essentially because it plays no special role in definition of $\Rad$. 

\begin{remark*} In \cite{Si}, C.~Simpson studies the middle convolution via the so-called `convoluters'. One can rewrite Theorem~\ref{th:maintheorem}\eqref{cs:R} using the
de Rham version of convoluters (\cite[Section~3.3]{Si}) with irregular singularities.
From the viewpoint of TDO rings, the convoluter encodes the $\D_{-\lambda}$-module
$\ell$ from $(\ref{cs:R}')$. 
\end{remark*}

\subsection{$l$-adic version of irregular Katz's algorithm}
The following observation is due to P.~Deligne. 

Most of the proof of Theorem~\ref{th:maintheorem} remains valid in the settings of $l$-adic
sheaves. The only exception is Lemma~\ref{lm:irrslope}. Its first statement still holds
(see \cite[Lemma~1.3]{Ka2}), but the second statement requires the additional assumption that $d$ (the denominator of the slope) is not divisible by the characteristic of the ground field. 
Let us make the statement precise.

Let $K$ be the fraction field of a Henselian valuation ring whose residue field is perfect of finite characteristic $p$. Denote by $I\subset\Gal(K^{sep}/K)$ the inertia
group of $K$ and by $P\subset I$ its Sylow's $p$-group. For a continuous finite-dimensional
representation $V$ of $P$ (over a fixed $l$-adic field), we denote its break decomposition by by $$V=\bigoplus_{s\in\Q} V(s).$$ One can check the following statement.

\begin{lemma}
Let $V$ and $W$ be continuous finite-dimensional
representations of $\Gal(K^{sep}/K)$. Fix $s\in\Q$ with denominator
$d$, and suppose $p$ does not divide $d$. Then 
\begin{equation}
\dim(\Hom(V,W)(s))\ge\dim V(s)\dim W(s)\left(1-\frac{1}{d}\right). \label{eq:ladic}
\end{equation}
\qed
\label{lm:ladic}
\end{lemma}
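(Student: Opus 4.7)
The plan is to mimic the proof of Lemma~\ref{lm:irrslope}\eqref{it:irrslope2}. That argument rested on two features: after pulling back to a tame ramified cover of degree $d$, an irreducible representation of break $s$ (with denominator $d$) splits into rank-one pieces carrying well-defined leading terms, and the cyclic Galois group of the cover permutes those leading terms in orbits of size $d$, so at most one cancellation occurs per orbit when forming $\Hom$. The $l$-adic version runs along the same lines, with Katz's theory of $l$-adic breaks providing the leading-term decomposition and the hypothesis $p\nmid d$ ensuring that the required tame extension $K_d/K$ of degree $d$ exists.

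First I would reduce to the case $V=V(s)$, $W=W(s)$: when two irreducibles have unequal breaks, the break of their tensor product is the maximum, so $V(a)\otimes W(b)^\vee$ contributes in full to $\Hom(V,W)(s)$ whenever $\max(a,b)=s$ with $a\ne b$, and combined with $\Hom(V(s),W(s))(s)\hookrightarrow\Hom(V,W)(s)$ this lets us restrict to $V,W$ of pure break $s$. Next I would pass to the unique totally tamely ramified Galois extension $K_d/K$ of degree $d$ (existing by $p\nmid d$ and perfectness of the residue field), with cyclic Galois group $\Gamma\cong\mu_d(\bar k)$. Restriction to $\Gal(K^{\mathrm{sep}}/K_d)$ leaves $V$ and $W$ unchanged as $P$-representations -- the wild inertia is common to $K$ and $K_d$ -- but rescales the break filtration by $d$, so the breaks all become the integer $sd$; crucially, these $K_d$-restrictions carry a canonical $\Gamma$-equivariant structure because $V$ and $W$ were representations of the full absolute Galois group of $K$.

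With integer breaks in hand, the heart of the argument is Katz's structure theory for $l$-adic representations of integer break (cf.~\cite{Ka2}): after a possible unramified enlargement of the coefficient field, $V$ and $W$ decompose as sums of one-dimensional $P$-characters $\chi_i$ and $\psi_j$, each with a well-defined leading character in an $\mathbb{F}_p$-vector space $M_{sd}$, the appropriate graded piece of the upper-numbering ramification filtration of $P$. A pair $\chi_i\otimes\psi_j^{-1}$ has break strictly less than $sd$ precisely when the leading characters of $\chi_i$ and $\psi_j$ coincide in $M_{sd}$. The group $\Gamma$ acts $\mathbb{F}_p$-linearly on $M_{sd}$; the multisets of leading characters arising from $V$ and $W$ are $\Gamma$-stable by the equivariance secured in the previous step; and the assumption that $s$ has denominator exactly $d$ forces each such leading character to have trivial $\Gamma$-stabilizer, hence $\Gamma$-orbit of size $d$. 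Within any $\Gamma$-orbit of pairs at most one can exhibit coinciding leading characters, so cancellation affects at most $(\dim V\cdot\dim W)/d$ pairs, yielding the required bound \eqref{eq:ladic}. The main obstacle is precisely this local structural input -- the leading-character formalism, the $\mathbb{F}_p$-structure on $M_{sd}$, the faithful $\Gamma$-action on it, and the cancellation criterion -- all standard in Katz's framework but requiring careful bookkeeping; the hypothesis $p\nmid d$ is used both for the existence of $K_d$ and for the faithfulness of the $\Gamma$-action on $M_{sd}$ (without which cancellations could occur in more than one pair per $\Gamma$-orbit, weakening the combinatorial bound).
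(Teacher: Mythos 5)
The paper gives no argument for Lemma~\ref{lm:ladic} at all (it is left as ``one can check''), and your plan --- transposing the proof of Lemma~\ref{lm:irrslope}\eqref{it:irrslope2}: a tame base change $K_d/K$ making the break integral, then a Galois-equivariant count of leading characters --- is exactly the intended route and does yield \eqref{eq:ladic}. Two of your intermediate assertions, however, are not correct as stated and need repair. First, there is no structure theory splitting $V|_P$ into one-dimensional characters of $P$: the image of $P$ may be a nonabelian finite $p$-group, so such a decomposition need not exist even when all breaks are integers. What the argument actually requires (and what is true) is weaker: after restriction to $K_d$ the subgroup $I^{(ds+)}$ acts trivially on the pure-break-$ds$ parts, so the elementary abelian quotient $H=I^{(ds)}/I^{(ds+)}$ acts on them, and one uses its isotypic decompositions $V=\bigoplus_\chi V_\chi$, $W=\bigoplus_\psi W_\psi$, with all $\chi,\psi$ nontrivial because the break is exactly $ds$; then the part of $\Hom(V,W)$ of break $<ds$ is precisely $\Hom(V,W)^{H}=\bigoplus_\chi\Hom(V_\chi,W_\chi)$. (Also, to have $K_d/K$ Galois with group $\Gamma\simeq\Z/d$ you should first replace $K$ by its maximal unramified extension, which changes no breaks.)

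Second, your counting step ``within any $\Gamma$-orbit of pairs at most one coincidence'' fails for the diagonal action: on a diagonal orbit of pairs, coincidence of leading characters is an all-or-nothing property. The correct count is the one in the paper's de Rham proof: the multiset of characters of $H$ occurring in $V$ is $\Gamma$-stable with multiplicities constant along orbits, and every orbit is free of size $d$, because a generator of $\Gamma$ acts on the graded piece $H$ (hence on its characters) by multiplication by a root of unity of exact order $d$ --- this is where ``$s$ has denominator exactly $d$'' enters, while $p\nmid d$ is used only for the existence of the tame extension and the rescaling of breaks, not for this freeness, which your appeal to mere faithfulness of the $\Gamma$-action does not by itself deliver. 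Since the members of a free orbit are pairwise distinct, for each isotypic piece of $W$ at most one member of each orbit of $V$-characters matches it, giving $\sum_\chi\dim V_\chi\dim W_\chi\le\frac{1}{d}\dim V\cdot\dim W$ and hence \eqref{eq:ladic}. With these two points restated, your sketch becomes a correct proof along the lines the paper intends.
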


\begin{remark*} Since Lemma~\ref{lm:ladic} holds for all $V$, $W$, one can replace them by $V(s)$ and $W(s)$. In other words, in \eqref{eq:ladic} one can replace 
$\Hom(V,W)(s)$ with the image of $\Hom(V(s),W(s))$ in this
space.
\end{remark*}

In particular, \eqref{eq:ladic} holds if either $\dim(V)$ or $\dim(W)$ is less than $p$.
This implies that the extension of Katz's algorithm works for wild $l$-adic local systems
whose rank does not exceed the characteristic $p$ of the ground field.

\nocite{*}
\bibliographystyle{amsalpha}
\bibliography{rigidrefs}
\end{document}